\theoremstyle{plain}
\newtheorem{thm}{Theorem}
\newtheorem{lemma}[thm]{Lemma}
\newtheorem{cor}[thm]{Corollary}
\newtheorem{exe}{Exercise}
\theoremstyle{definition}
\theoremstyle{remark}
\newcommand{\half}{{1/2}}
\newcommand{\third}{{1/3}}
\newcommand{\tthirds}{{2/3}}
\newcommand{\fsixths}{{5/6}}
\newcommand{\prn}[1]{\left(#1\right)}
\newcommand{\abs}[1]{\left|#1\right|}
\newcommand{\brk}[1]{\left[#1\right]}
\newcommand{\sups}[1]{{${}^{\text{#1}}$}}
\newcommand{\goto}{{\rightarrow}}
\newcommand{\Ai}{{\mathop{\mathrm{Ai}}}}
\newcommand{\Bi}{{\mathop{\mathrm{Bi}}}}
\newcommand{\tJ}{{\tilde J}}
\newcommand{\tY}{{\tilde Y}}
\newcommand{\UU}{{\mathcal{U}}}
\newcommand{\WW}{{W}}
\newcommand{\cW}{{\cal W}}
\newcolumntype{C}{>{\(}c<{\)}} 
\newcolumntype{L}{>{\(}l<{\)}} 
\newcolumntype{R}{>{\(}r<{\)}}
\newcommand{\eps}{{\varepsilon}}
\newcommand{\Oh}{{\mathcal{O}}}
\newcommand{\junk}[1]{}
\newcounter{mysubsubsec}[subsection]
\renewcommand{\themysubsubsec}{{\itshape \alph{mysubsubsec}}}
\newcommand{\mysubsubsec}[1]{%
\paragraph%
\noindent%
\stepcounter{mysubsubsec}%
\noindent{\itshape(\themysubsubsec) #1: }}
\title{The hanging thin rod: \\ A singularly perturbed eigenvalue problem}
\author{Yossi Farjoun\footnotemark[2] \and David
  G. Schaeffer\footnotemark[3]}
\begin{document}
\maketitle
\begin{abstract}
We study the vibrations of a hanging thin flexible rod, in which the dominant restoring force in most of the domain is tension due to the weight of the rod, while bending elasticity plays a small but non-negligible role.
We consider a linearized description, which we may reduce to an eigenvalue problem. 
We solve the resulting singularly perturbed problem asymptotically up to the first modification of the eigenvalue.
On the way, we illustrate several important problem-solving techniques: modeling,
nondimensionalization, scaling, 
and especially use of asymptotic series.
\end{abstract}
\footnotetext[2]{G.\ Mill\'an Institute of Fluid Dynamics, Nanoscience, and Industrial Mathematics \\
Universidad Carlos III de Madrid,
Avenida de la\ Universidad 30, Legan\'es, Spain, 28911 \\
Corresponding Author: \texttt{yfarjoun@ing.uc3m.es}}
\footnotetext[3]{Duke University, Department of Mathematics and Center for Nonlinear and
Complex Systems,\\ Durham NC 27708-0320, USA}

\section{Introduction}
Linear transverse vibrations of a uniformly stretched string are modeled with the wave equation
\begin{equation} \label{eq:wave}
	\rho w_{tt} - T w_{yy} = 0,
\end{equation}
where $w(y,t)$ is the displacement of the string, $\rho$ the mass per unit length, and $T$ the tension.
In \eqref{eq:wave} the string is assumed perfectly flexible; if resistance to bending is not negligible (e.g., for a rod), then the equation requires an additional, fourth-order, term, 
\begin{equation} \label{eq:beambending}
 	\rho w_{tt} - T w_{yy} + E I w_{yyyy} = 0,
\end{equation}
where $E$ is Young's modulus and $I$ is the cross-sectional moment of area.
If the tension depends on position $y$, then the middle term of \eqref{eq:beambending} is replaced by a variable-coefficient 
operator in divergence form;
in particular, if the rod is vertical so that the tension at a point $y$ results solely from the weight of the rod below it, located say in $[0,y]$, then the tension is $T(y)= \rho g y$ and \eqref{eq:beambending} is modified to 
\begin{equation} \label{eq:wave:flag:unscaled}
 	\rho w_{tt} - \rho g (y w_y)_y + E I w_{yyyy} = 0,
\end{equation}
where $g$ is the acceleration of gravity.
In this paper we study \eqref{eq:wave:flag:unscaled} in the asymptotic limit where the bending forces are small compared to tension, using singular perturbations%
\footnote{We consider only deflections in one transverse direction.  In the linear approximation assumed in \eqref{eq:wave:flag:unscaled}, vibrations in the two transverse directions are independent.}.

Our interest in this problem originated from Manela and Howe's paper
\cite{MH2009}  simulating the
waving of a flag in the wind. 
The displacement of the flag was represented
as an expansion in terms of eigenfunctions of the second-order linear operator
\begin{equation*}
  w \mapsto \frac{\partial}{\partial y} \prn{y \frac{\partial w}{\partial y}  };
\end{equation*}
i.e., the part of (\ref{eq:wave:flag:unscaled}) due to tension alone, neglecting bending resistance.
It was found that this expansion converged very poorly, if at all.
In this paper we study the effect of a small bending resistance on the eigenfunctions, without
reference to wind-driven motion.

The problem of fluttering
plates and rods driven by a fluid has been studied earlier in various
places. In a recent paper, Argentina and Mahadevan \cite{AM2005} study the problem
where the bending rigidity dominates, and in a previous paper, Dowling \cite{Dowling1987}
uses matched asymptotic expansions to solve a slightly different
problem where the fluid-loading is taken into account. This has the
consequence of moving the main singular point into the bulk which
results in significantly different behavior.

\section{Preliminary Analysis} 

\subsection{Boundary Conditions}
Equation \eqref{eq:wave:flag:unscaled} must be supplemented with two boundary conditions at each end of the domain, say $0 \le y \le L$.
At the free end $y=0$  there should be no bending moment (i.e., $E I w_{yy}(0,t)=0$) and no force.
At a point where the tension vanishes, the force is given by the negative derivative of the bending
moment, so we obtain the second boundary condition $w_{yyy}(0,t)=0$. 

At $y=L$ we consider two distinct possibilities for boundary conditions: clamped and pinned. 
Both imply that there is no deflection at $y=L$; thus, $w(L,t)=0$.
For the second boundary condition, clamped imply zero slope (i.e., $w_y(L,t)=0$), and pinned imply zero
bending moment  (i.e., $w_{yy}(L,t)=0$). 

Thus, in summary, the two possible sets of boundary conditions are 
\begin{equation}
  w_{yy}(0,t)=0, \quad w_{yyy}(0,t)=0,\quad  w(L,t)=0, \; \mbox{plus} \left\{\begin{alignedat}{3}
  \mbox{either}&	& \quad w_y(L,t)&=0,& \quad&\text{(Clamped)}\\
  \mbox{or}&	& \quad w_{yy}(L,t)&=0. &\quad &\text{(Pinned)}
\end{alignedat}\right.
\label{eq:BC:Summary}
\end{equation}

\subsection{Non-dimensionalization and Scaling}
Despite there being several parameters that control the
behavior of \eqref{eq:wave:flag:unscaled}, these may be reduced to a single non-dimensional ``group.''
Let us scale $y$ by the length $L$ and $t$ by $L/c$, where $c=\sqrt{gL}$ specifies the order of magnitude of the speed tension waves; i.e., let
\begin{equation} \label{eq:scaling}
	\tilde{y} = \frac{y}{L}, \quad \tilde{t} = \sqrt{\frac{g}{L}} t.
\end{equation}
The reason we use the timescale of the tension waves is that we are
interested in looking at the \emph{thin rod} case, where the
elasticity plays a small role in the dynamics. 
(We do not scale $w$ since in a linear equation this would not change anything.)

Substituting \eqref{eq:scaling} into \eqref{eq:wave:flag:unscaled} yields
\begin{equation}
\label{eq:wave:flag}
   w_{\tilde{t} \tilde{t}}-(\tilde{y} w_{\tilde{y}})_{\tilde{y}}
		+\eps w_{\tilde{y} \tilde{y} \tilde{y} \tilde y}=0,\qquad \mbox{where } \eps=\frac{EI}{\rho g L^3}.
\end{equation}
The dimensionless constant $\eps$ 
compares the importance of bending elasticity ($\Oh(EI/L^2$)) with the maximum tension
($\rho g L$).
Of course $\eps$ is small if $L$ is large.
Let us also examine the dependence of $\eps$ on $a$, the
width of the rod.
The second moment of area is given by
\begin{equation}
  \label{eq:second:moment:area}
  I = \int_C x^2 dx dz,
\end{equation}
where $C$ is the cross-sectional area of the rod and $x$ is the direction of bending.
This moment scales like $a^4$, while $\rho$, mass \emph{per unit length}, scales like $a^2$. 
Thus, $\eps$ contains an implicit factor of $a^2$%
\footnote{In (\ref{eq:wave:flag:unscaled}), the coefficient $EI$ of the fourth derivative is appropriate for a solid rod but not for 
a \emph{cable} or \emph{string}---i.e., many small fibers twisted together.  The bending resistance of
any one fiber in a string is all but infinitesimal; the primary resistance comes from the friction of fibers sliding over one another.  
While it is 
difficult to calculate the bending resistance of such a collection of fibers, this resistance is \emph{much} smaller than the Young's modulus 
times the 
area moment of the whole cable.  Thus, appropriate values of $\eps$ for a string may be very small indeed.  For accurate modeling of a 
string it might be necessary to include a friction term, say proportional to $w_t$, in (\ref{eq:wave:flag:unscaled}).  Such a term would not 
change the eigenfunctions found below, and its effect on the time dependence (\ref{eq:separation:time}) is easily calculated.}.

Below we omit the tildes from \eqref{eq:wave:flag}.

\subsection{Separation of Variables}

We look for a solution of \eqref{eq:wave:flag} in separated form $ w(y,t)=u(y) \times \Omega(t)$, and find that $u$ must satisfy
\begin{equation}
  \label{eq:singularly_perturbed}
  \eps u'''' - (yu')' = \lambda u, \qquad 0<y<1,
\end{equation}
where $\lambda$ is the eigenvalue parameter, and 
\begin{equation}
  \label{eq:separation:time}
  \Omega(t)=\Omega_0 e^{\pm i\sqrt{\lambda}t}.
\end{equation}
Note that \eqref{eq:singularly_perturbed} is singular for two reasons: (i)~$\eps$ multiplies the highest-order
derivative in the equation and (ii)~the coefficient of leading-order derivative in the reduced equation (after setting $\eps=0$), 
\begin{equation}
  \label{eq:reduced}
   - (yu')' = \lambda u,
\end{equation}
vanishes at one end of the interval.
In the remainder of the paper we solve asymptotically this \emph{singularly perturbed} eigenvalue problem, subject to the boundary conditions
\begin{equation}
  \label{eq:boundary:conditions}
  u''(0)=0, \quad u'''(0)=0,\quad  u(1)=0, \; \mbox{plus } \left\{\begin{alignedat}{3}
  \mbox{either}&	&\quad u'(1)&=0,& \quad&\text{(Clamped)}\\
  \mbox{or}&	&\quad u''(1)&=0.& \quad&\text{(Pinned)}
\end{alignedat}\right.
\end{equation}
Since eigenfunctions are determined only up to a multiplicative
constant, we add the normalization
\begin{equation}
 \label{eq:extraBC}
	u(0) = 1,
\end{equation}
thereby selecting a unique solution of this problem.

Incidentally, we claim that \eqref{eq:singularly_perturbed} with either boundary conditions \eqref{eq:boundary:conditions} is self-adjoint and in fact positive-definite.
This may be proved with the usual integration-by-parts argument, with
one subtlety: usually, boundary terms vanish because of the boundary conditions, but the term $(yu')'$ makes no contribution at $y=0$ only because the coefficient $y$ vanishes there.
Thus, all eigenvalues of this problem will be positive real.

\bigskip
The remainder of this paper is organized as follows.
In Section~\ref{sec:first_attempts} 
we propose a naive derivation of the limiting behavior of the eigenvalues and eigenfunctions 
of \eqref{eq:singularly_perturbed}  as $\eps \to 0$, 
and we present some numerical results.
In Section~\ref{sec:preparation} we find the boundary-layer scalings for the asymptotic analysis and solve the reduced equations in various regimes.
In Sections~\ref{sec:clamped} and~\ref{sec:pinned} we derive matched asymptotic series for the eigenvalues and eigenfunctions
with clamped and pinned boundary conditions, respectively.
Finally, in Section~\ref{sec:closing} we discuss the applicability of small-$\eps$ asymptotics for later eigenvalues in the sequence of eigenvalues.

In addition to its research interest, this problem provides a relatively simple example of a
matched asymptotic expansion that requires logarithmic terms.
Thus for pedagogical reasons, we strive for careful, thorough explanations, and we have included
several exercises for the readers to sharpen their understanding.

\section{First Attempts}
\label{sec:first_attempts}
\subsection{The Naive Solution: \protect$\eps=0$}
\label{subsec:naive}
Setting $\eps$ equal to zero in \eqref{eq:singularly_perturbed} yields equation \eqref{eq:reduced}.
Of course no solution of this equation can satisfy all the boundary conditions \eqref{eq:boundary:conditions}.
It is natural to conjecture that at $y=1$ the ``most physical'' solution will at least satisfy the lower-order boundary condition there
 \begin{equation}
 \label{eq:red:BC_1}
	u(1) = 0.
\end{equation}
Because \eqref{eq:reduced} is singular at $y=0$, it is unclear what boundary conditions, if any, ought to be imposed there.

In fact, \eqref{eq:reduced} can be solved explicitly in terms of Bessel functions of order zero.
The connection to Bessel functions may be motivated by expanding all terms in \eqref{eq:reduced},
\begin{equation} \label{eq:reduced:expanded}
 	y u'' + u' + \lambda u = 0
\end{equation}
and observing that this equation, like Bessel's equation, has exactly two singular points, a regular one at $y=0$ and an irregular one at infinity.
Moreover, the indicial equation of \eqref{eq:reduced:expanded}, obtained by seeking a series solution
\begin{equation}
 		u(y) = y^p \; \sum_{j=0}^\infty c_j y^j
\end{equation}
where $c_0 \ne 0$, has a double root $p=0$, again like Bessel's equation of order zero. 
To make the reduction, consider the substitution $u(y)=v(C y^p)$; a simple calculation shows that if
 one chooses $p=\tfrac12, C=2\sqrt{\lambda}$, then $v$ satisfies Bessel's equation $v''+x^{-1}v'
+v=0$. 
Thus, the general solution of \eqref{eq:reduced} is
\begin{equation}
 \label{eq:lincombBessel}
 	u(y) = a J_0\prn{2\sqrt{\lambda y}} + b Y_0\prn{2\sqrt{\lambda y}}
\end{equation}
for arbitrary constants $a$ and $b$.
As $y \to 0$, $J_0$ is smooth, but $Y_0$ blows up logarithmically (see Appendix~\ref{subsec:bessel}).
Unless $a=b=0$, neither 
of the boundary conditions \eqref{eq:boundary:conditions} at $y=0$ can be satisfied.
Let us require that $b=0$ so that at least $u$ remains bounded (and \eqref{eq:extraBC} is meaningful).
If the boundary condition \eqref{eq:red:BC_1} at $y=1$ is to be satisfied, then $\lambda$ must satisfy
\begin{equation}
 \label{eq:zeroBessel}
	J_0\prn{2\sqrt{\lambda}}=0.
\end{equation} 

This intuitive discussion gives eigenvalues $\lambda\approx  1.4458,
7.6178, 18.721, \ldots $
with eigenfunctions $J_0(2\sqrt{\lambda y})$,
for \emph{either clamped or pinned} boundary conditions. 
The eigenfunctions are graphed in Figure~\ref{fig:besselJ_0}.
Despite the many loose threads in the argument, exactly these
eigenvalues and eigenfunctions will emerge as the leading-order term
in our asymptotic solution,
thus providing a far more satisfactory derivation. 
Moreover, the difference between clamped and pinned BC will emerge in the higher-order terms of the series.
However, before tackling the asymptotics, we turn to numerics.
\begin{figure}[th]
  \centering
  \scalebox{.9}{\includegraphics{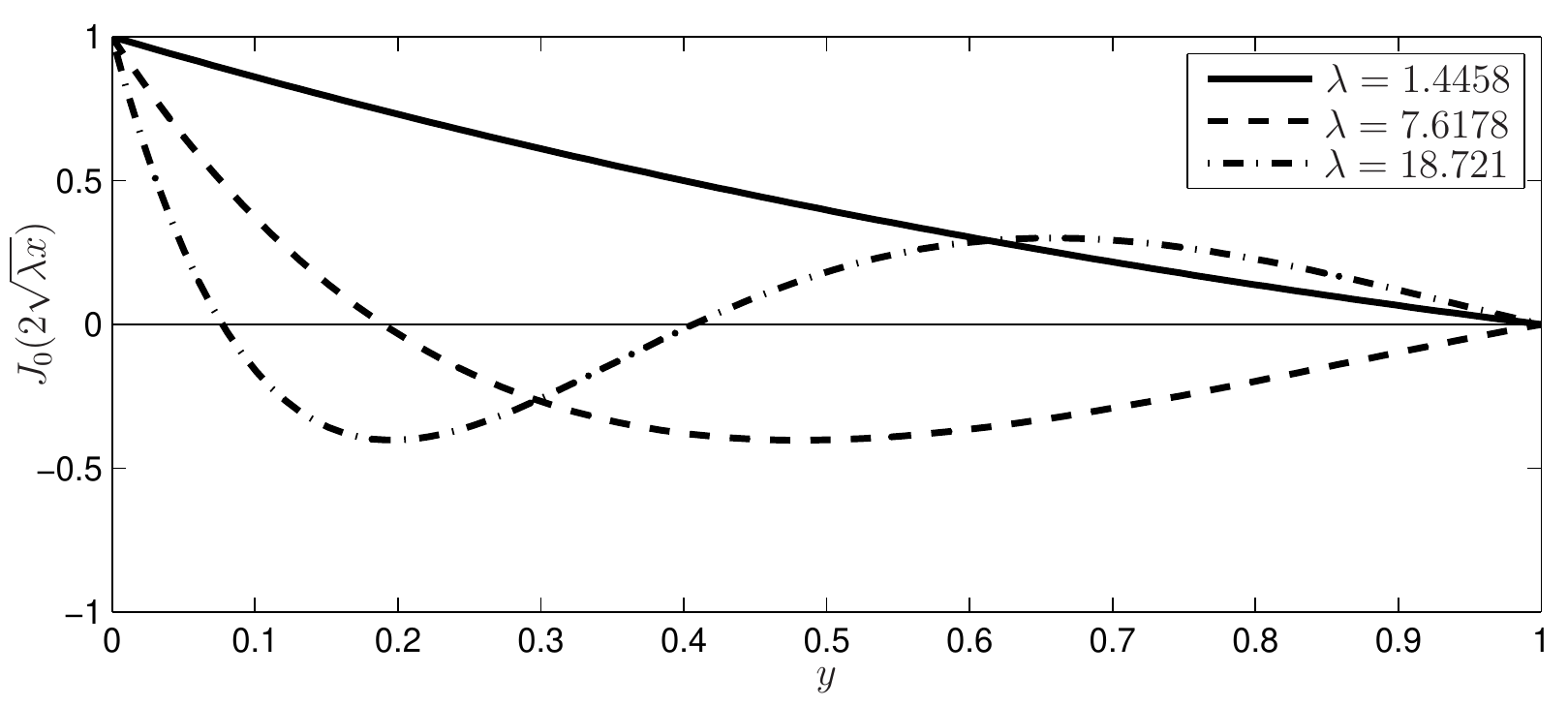}}
  \caption{The naive solution of the reduced equation
    (\ref{eq:reduced}) using the three smallest roots of the Bessel
    function $J_0(2\sqrt{\lambda})$.}
  \label{fig:besselJ_0}
\end{figure}

\subsection{Numerical Solutions}

In an exploratory numerical code, we approximated (\ref{eq:singularly_perturbed}) by differences on a uniform grid.
However, especially for small $\eps$, we found this simple approach gave unreliable results, even when up to 50,000 grid points were used.
For the data presented below, we used the boundary-value solver \texttt{bvp5c} in Matlab. 
The interval $0 \le y \le1$ was divided into 3 distinct subintervals
(corresponding to the 2 boundary layers and the bulk domain,
introduced below), which were connected with internal boundary conditions.
Results from the uniform-grid code were used as initial guesses for
the internal iterative solver.
Very small values of $\eps$ were approached by continuation.

Graphs of the computed eigenfunctions resembled the naive eigenfunctions in Figure~\ref{fig:besselJ_0}, but graphs of their 
derivatives differed substantially.
This issue is explored in some detail in
Subsections~\ref{subsec:composite:approximation} and~\ref{subsec:pinned:2/3}(e).

Figure~\ref{fig:lambda:bvp} shows the divergence of computed
eigenvalues from the naive approximation; specifically, a log-log plot of
$\abs{\lambda^{(n)}(\eps) - \lambda^{(n)}(0)}/\lambda^{(n)}(0)$ vs. $\eps$, where
$\lambda^{(n)}(0)$ is the $n$\sups{th} root  of $J_0(2\sqrt{\lambda})$.
The results support the naive analysis and also suggest that
\begin{equation}
  \label{eq:eigen_values:general}
  \lambda^{(n)}(\eps) = \lambda^{(n)}(0)  +  \Oh(\eps^p)
\end{equation}
where $p=1/2$ or $p=1$ for clamped or pinned boundary conditions, respectively.
This suggestion is confirmed by the asymptotic series solution constructed below.

\begin{figure}[th]
  \subfloat[Clamped]{\label{fig:lambda:clamped:bvp}
  \scalebox{.8}{\includegraphics{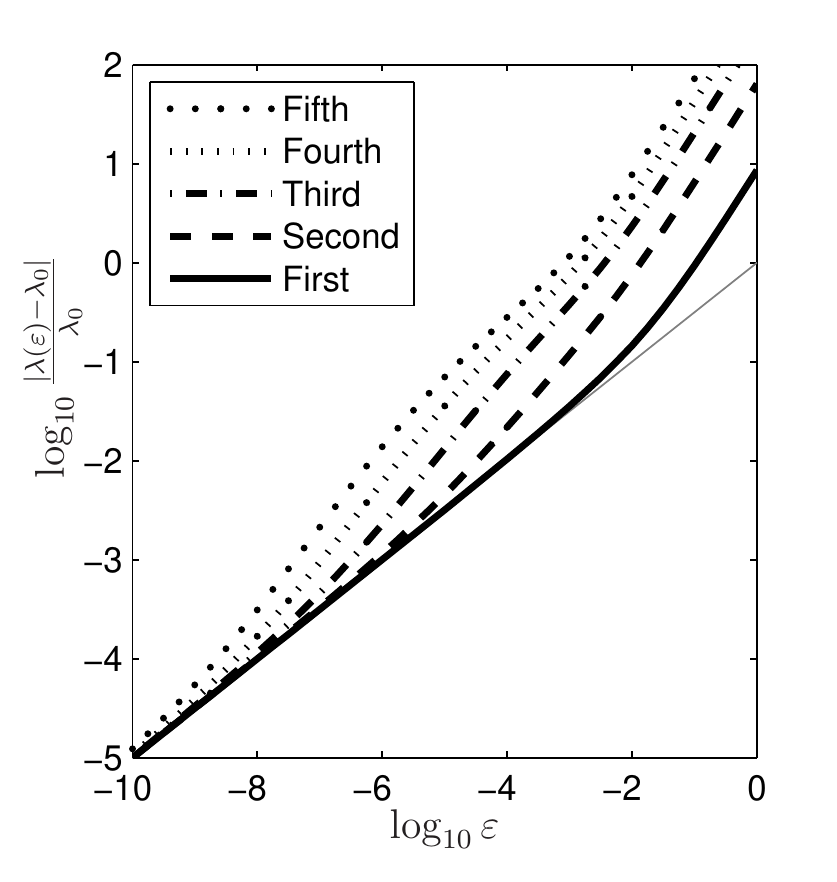}}}
  \subfloat[Pinned]{\label{fig:lambda:pinned:bvp}
  \scalebox{.8}{\includegraphics{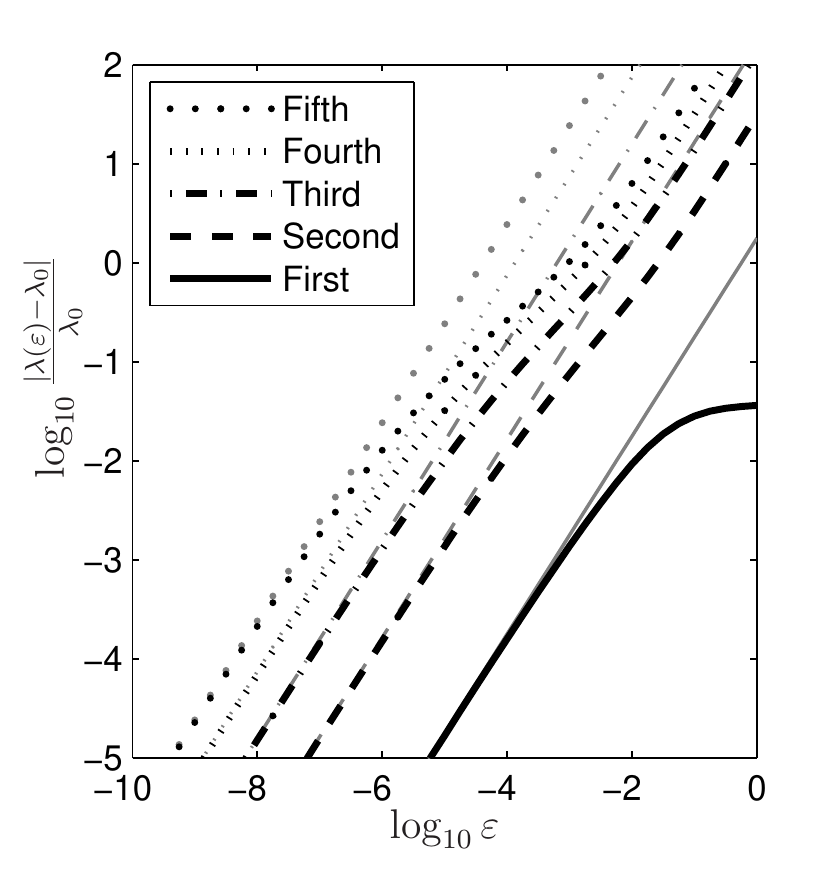}}}
  \caption{A log-log plot of the relative error
    $e(\eps) = \abs{\frac{\lambda^{(n)}(\eps)-\lambda^{(n)}(0)}{\lambda^{(n)}(0)}}$ for the first five
    eigenvalues.   According to our asymptotic analysis, in the clamped case, 
$e(\eps)\sim(\lambda_\half/\lambda_0) \eps^\half$ where $\lambda_\half$ is given in Table~\ref{tab:asymptotic:series:clamped}; since $\lambda_\half=\lambda_0$, all lines
collapse onto $e(\eps)=\eps^\half$ (which is drawn for reference).  In the pinned case $e(\eps)\sim(\lambda_1/\lambda_0) \eps$ where $\lambda_1$ is given by 
\eqref{eq:pinned:lambda_1}.  Reference lines $e(\eps)=(\lambda_1/\lambda_0) \eps$ are drawn in the figure.}
\label{fig:lambda:bvp}
\end{figure}

\section{Preparation for the Asymptotic Solutions}
\label{sec:preparation}
\subsection{Boundary-Layer Scalings}

In the bulk of the domain, far away from the boundaries, we assume
that $\eps$ times the fourth derivative is small enough that we can ignore this term to
lowest order, obtaining \eqref{eq:reduced}.
As we have seen, solutions of this equation cannot satisfy all the boundary conditions.
We expect therefore that there are two adjustment zones---boundary
layers---connecting the solution in the bulk to the boundaries, in a way that satisfies the boundary conditions.
Near the boundaries, we expect that the fourth derivative becomes large enough 
that, even when multiplied by $\eps$, it cannot be ignored.
As a first step we have to find the boundary-layer scalings that are appropriate near the boundaries. 

To find the
scaling near the $y=0$ boundary, we assume that $y$ is of order
$\eps^p$; specifically that $u(y;\eps)$ can be approximated by $U(X;\eps)$ where
$X=\eps^p y$. To find $p$ we differentiate according to
Eq.~\eqref{eq:singularly_perturbed}  and find that 
\begin{equation}
\underbrace{\eps^{1+4p}U''''\vphantom{)}}_a-\underbrace{\eps^p(XU')'}_b=
	\underbrace{\lambda U\vphantom{)}}_c
\label{eq:boundary:layer}
\end{equation}
where prime indicates differentiation with respect to $X$.
An appropriate scaling must balance Eq.~\eqref{eq:boundary:layer}; i.e., two terms must have the same order in $\eps$ and the third term must have order this high or higher.
Thus, we look at three possible
cases:
\begin{itemize}
\item[$a\approx b$] Thus $1+4p=p$ and so $p=-1/3$; in this case terms
  (a) and (b) are much larger (i.e., lower order) than (c).  Hence it is a good scaling of the
boundary layer. 
\item[$b\approx c$] Thus $p=0$ ({\it i.e.}, no scaling
  happens.) Here the two terms (b) and
  (c) are larger than (a), so this scaling is valid; indeed this is the bulk scaling. 
\item[$a\approx c$] Thus $1+4p=0$ so $p=-1/4$ and so term (b) is much
  larger than (a) and (c), so this scaling does not
  balance.\footnote{This scaling would be valid in the opposite
    asymptotic limit where one considers $\eps$ to be large.}
\end{itemize}
The only boundary layer we find near $y=0$ scales the inner variable $X$
like $\eps^{1/3}$, and the resulting equation is therefore
\begin{equation}
  \label{eq:perturbed:boundary:0}
    U'''' - (XU')' = \eps^\third\lambda U.
\end{equation}

To find the
scaling near at the $y=1$ boundary, we
write the solution $u(y;\eps)$ as $V(Z;\eps)$ where
$Z=\eps^q (1-y)$. From
Eq.~\eqref{eq:singularly_perturbed}  we obtain
\begin{equation}
	\eps^{1+4q}V'''' -\eps^{2q}\prn{\prn{1-\eps^{-q}Z}V'}'= \lambda V.
\label{eq:boundary:layer:1}
\end{equation}
Here there are also three possible cases.
\begin{exe}
In the same way as before, show that the only valid, interesting
scaling for small $\eps$ at the $y=1$ boundary has $q=-1/2$, and that the resulting equation for $V(Z;\eps)$ is
\begin{equation}
  \label{eq:perturbed:boundary:1}
    V'''' - \prn{\prn{1-\eps^\half Z}V'}' = \eps\lambda V.
\end{equation}
\end{exe}

\subsection{Reduced Equations}

In the asymptotic series for the solution, terms of order 1/3 and 1/2 are forced by the boundary layers, and subsequent terms include all sums of integer multiples of 1/3 and 1/2.
Thus, we look for a solution that has the following form: 
\begin{itemize}
\item 
In the bulk we expand the solution $u(y;\eps)$ as
\begin{equation}
u(y;\eps)=u_0(y)+\eps^\third u_\third(y)+\eps^\half
u_\half(y)+\eps^{\tthirds} u_{\tthirds}(y)+\eps^{5/6} u_{5/6}(y)+ \eps u_1(y) +\ldots
\label{eq:asymptotic:series:u}
\end{equation}
\item 
Near the $y=0$ boundary, the solution is  approximated by a similar
asymptotic series for $U(X;\eps)$, with $X=\eps^{-1/3} y$.
\item 
Similarly, for the solution near $y=1$ with $V(Z;\eps)$, where $Z=\eps^{-1/2} (1-y)$.
\item 
The  eigenvalue%
\footnote{Of course, there is a sequence of eigenvalues
  with corresponding eigenfunctions. We drop the superscript in the
  $\lambda^{(n)}$ notation for sake of a cleaner presentation.}%
 $\lambda$ is itself expanded as an asymptotic series: 
\begin{equation}
\lambda(\eps)=\lambda_0+\eps^\third \lambda_\third+\eps^\half \lambda_\half+\eps^\tthirds \lambda_{\tthirds}+\eps^\fsixths \lambda_{\fsixths}+\eps \lambda_1+\ldots
\label{eq:asymptotic:series:lambda}
s\end{equation}
\end{itemize}
Below, we will see that the bulk series needs to be augmented by a term of order $\eps \log \eps$,
and logarithmic terms will be needed at higher order as well.
However, for now we wait for this complication to arise naturally.

For the three domains, we substitute the asymptotic expansion of
$u$, $U$, or $V$, and of $\lambda$ into the appropriate differential
equation and collect powers of $\eps$. 
Doing this for an expansion up to $\eps^1$ results in 
\begin{align}
  \label{eq:reduced:equations:u}
  &\begin{array}{rl}
\eps^0:& (y u_0')'+\lambda_0 u_0=0\\
\eps^\third:&  (y u_\third')' + \lambda_0 u_\third= -\lambda_\third u_0 \\
\eps^\half:& (y u_\half')' + \lambda_0 u_\half  = -\lambda_\half u_0  \\
\eps^\tthirds:& (y u_\tthirds')' + \lambda_0 u_\tthirds = -\lambda_\third u_\third - \lambda_\tthirds u_0\\
\eps^{5/6}:& (y u_\fsixths')' + \lambda_0 u_\fsixths = -\lambda_\third
u_\half-\lambda_\half u_\third -\lambda_\fsixths u_0 \\
\eps^1:& (y u_1')'+ \lambda_0 u_1=  u_0'''' -\lambda_\third u_\tthirds -
\lambda_\half u_\half- \lambda_\tthirds u_\third -\lambda_1 u_0
\end{array}
\intertext{for $u$,}
  \label{eq:reduced:equations:U}
  &\begin{array}{rl}
\eps^0:& U_0'''' - (X U_0')'=0\\
\eps^\third:&  U_\third'''' - (X U_\third')'=\lambda_0 U_0\\
\eps^\half:&  U_\half'''' - (X U_\half')'=0\\
\eps^\tthirds:& U_\tthirds'''' - (X U_\tthirds')'=\lambda_0 U_\third+\lambda_\third U_0\\
\eps^{5/6}:& U_\fsixths'''' - (X U_\fsixths')'=\lambda_0
U_\half+\lambda_\half U_0 \\
\eps^1:&  U_1'''' -(X U_1')'=  \lambda_0 U_\tthirds +\lambda_\third U_\third +
\lambda_\tthirds U_0
\end{array}
\intertext{for $U$, and}
\label{eq:reduced:equations:V}
&\begin{array}{rl}
\eps^0:& V_0'''' - V_0''=0\\
\eps^\third:&  V_\third'''' - V_\third''=0\\  
\eps^\half:&  V_\half'''' -V_\half''= - (Z V_0')'\\
\eps^\tthirds:& V_\tthirds'''' - V_\tthirds''=0\\
\eps^{5/6}:&V_\fsixths'''' - V_\fsixths''=-(Z V_\third')'  \\
\eps^1:&  V_1'''' - V_1''=  \lambda_0 V_0 -(Z V_\half')'
\end{array}
\end{align}
for $V$.

\subsection{Solutions of the Reduced Bulk Equations}
\label{subsec:sol:bulk}

In the bulk, the reduced equations (\ref{eq:reduced:equations:u}) have the form 
$(y u')'+\lambda_0 u=g$, or expanding the derivative and dividing by $y$,
\begin{equation} \label{eq:inhomog-bulk}
 	u'' +\frac{1}{y} u' + \frac{\lambda_0}{y} u = \frac{g(y)}{y}.
\end{equation}
We saw in Subsection~\ref{subsec:naive} that in the homogeneous case, $g \equiv 0$, the general solution of (\ref{eq:inhomog-bulk}) is given by (\ref{eq:lincombBessel}).
To shorten the notation and form a more convenient basis for the solution space, we define
\begin{align}
\label{eq:Bessel:short:J}
  \tJ(y) &= J_0(2\sqrt{\lambda_0 y}) \\ 
\label{eq:Bessel:short:Y}
  \tY(y) &=\pi Y_0(2\sqrt{\lambda_0 y}) -(\log \lambda_0+2\gamma)\tJ(y)
\end{align}
where $\gamma$ is Euler's constant.
The modification of $Y_0$ simplifies the asymptotic behavior as $y \to 0$ (see Appendix~\ref{subsec:bessel}):
\begin{equation} \label{eq:Y-asymp}
 		\tY(y)=\log(y)+\Oh(y \log y).
\end{equation}
Note that this notation hides $\lambda_0$, an unknown constant that still needs to be determined.

In the inhomogeneous case we have
\begin{lemma}
\label{lem:w}
Given a continuous
function $g(y)$, there exists exactly one solution of \eqref{eq:inhomog-bulk}, say $w(y)$, that is continuous on the closed interval $[0,1]$ and satisfies $w(0)=0$.
\end{lemma}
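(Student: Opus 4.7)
\medskip
\noindent\textbf{Proof proposal.} My plan is to construct $w$ explicitly by variation of parameters using the two homogeneous solutions $\tJ$ and $\tY$ introduced in \eqref{eq:Bessel:short:J}--\eqref{eq:Bessel:short:Y}, and then to read off existence, continuity, and uniqueness from their known behavior at $y=0$.

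First I would compute the Wronskian $W(y)=\tJ(y)\tY'(y)-\tY(y)\tJ'(y)$. Writing \eqref{eq:inhomog-bulk} as $u''+y^{-1}u'+\lambda_0 y^{-1}u=g/y$ and applying Abel's identity gives $W(y)=C/y$ for a nonzero constant $C$ (which could, if one wished, be pinned down by using the small-$y$ expansions $\tJ(y)=1+\Oh(y)$ and $\tY(y)=\log y+\Oh(y\log y)$). Variation of parameters on the self-adjoint form $(yu')'+\lambda_0 u=g$ then yields the candidate
\begin{equation*}
w(y)\;=\;\frac{1}{C}\left[\tY(y)\int_{0}^{y}\tJ(s)\,g(s)\,\ud s\;-\;\tJ(y)\int_{0}^{y}\tY(s)\,g(s)\,\ud s\right],
\end{equation*}
where I have chosen the lower limits to be $0$ in both integrals rather than some $a\in(0,1]$, precisely because this is the choice that will kill the boundary contributions at the singular endpoint.

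The next step is to check that this $w$ is continuous on $[0,1]$ and that $w(0)=0$. On any closed subinterval $[\delta,1]$ with $\delta>0$ the coefficients of the ODE are smooth, so standard ODE theory guarantees $w\in C^{2}[\delta,1]$; only the endpoint $y=0$ requires care. Since $g$ is continuous on $[0,1]$, hence bounded, the integral $\int_{0}^{y}\tJ(s)g(s)\,\ud s=\Oh(y)$, and $\tY(y)\cdot\Oh(y)=\Oh(y\log y)\to 0$ by \eqref{eq:Y-asymp}. For the second piece, $|\tY(s)|\lesssim|\log s|$ is integrable near $0$, so $\int_{0}^{y}\tY(s)g(s)\,\ud s\to 0$; multiplying by the bounded factor $\tJ(y)$ still gives a quantity that vanishes as $y\to 0$. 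Thus $w$ extends continuously to $[0,1]$ with $w(0)=0$.

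Uniqueness follows from the structure of the homogeneous solution space. Any second continuous solution $\tilde w$ satisfying $\tilde w(0)=0$ would differ from $w$ by a homogeneous solution $a\tJ+b\tY$. Continuity at $0$ forces $b=0$, since $\tY$ diverges logarithmically there; the remaining condition $(a\tJ)(0)=a=0$ then forces $a=0$. I expect the main (mild) obstacle to be the asymptotic estimate showing $\tY(y)\int_{0}^{y}\tJ(s)g(s)\,\ud s\to 0$, because it uses the logarithmic singularity of $\tY$ in a nontrivial cancellation with the $\Oh(y)$ size of the inner integral; everything else is bookkeeping with \eqref{eq:Y-asymp} and the smoothness of $\tJ$.
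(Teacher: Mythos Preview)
Your proposal is correct and follows essentially the same route as the paper: variation of parameters with the basis $\tJ,\tY$, the Wronskian $\cW(y)=C/y$ (the paper pins down $C=1$ via the Appendix), and the same two estimates at $y=0$---$\tY(y)\int_0^y\tJ g=\Oh(y\log y)\to0$ and $\tJ(y)\int_0^y\tY g\to0$ by integrability of $\log$---followed by the identical uniqueness argument that continuity kills $b$ and the condition $w(0)=0$ kills $a$.
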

For this singular problem, just one boundary condition suffices to determine the solution uniquely.

\begin{proof}[Proof of Lemma~\ref{lem:w}:] {\itshape (Existence) }
Using variation of coefficients, we find that one solution of \eqref{eq:inhomog-bulk} is given by 
\begin{align}
  \label{eq:w:for:j_0}
   w(y)&=c_1(y) \tJ(y) + c_2(y) \tY(y),
\intertext{where}
\label{eq:w:for:j_0:c1}
c_1(y)&=-\int_0^y\frac{1}{x \cW(x)}\tY(x)g(x)dx=-\int_0^y
\tY(x)g(x) dx\\
\label{eq:w:for:j_0:c2}
c_2(y)&=\int_0^y \frac{1}{x \cW(x)}\tJ(x)g(x)dx=\int_0^y
\tJ(x)g(x)dx.
\end{align}
Here $\cW=\tJ \tY'-\tY \tJ'$ denotes the Wronskian of $\tJ(y)$ and $\tY(y)$, and we have used the fact $\cW(y)=1/ y$ (see Appendix~\ref{subsec:bessel}).
Regarding the first term in \eqref{eq:w:for:j_0}, although the
integrand defining $c_1$ in \eqref{eq:w:for:j_0:c1} diverges logarithmically at $x=0$, the integral converges for all $y \in [0,1]$ and vanishes if $y=0$.
Regarding the second term \eqref{eq:w:for:j_0:c2}, although $\tY(y)$ blows up logarithmically as $y \to 0$, the coefficient satisfies $c_2(y) = \Oh(y)$ so the product 
$c_2 \tY$ vanishes at $y=0$.
Thus,  \eqref{eq:w:for:j_0} solves \eqref{eq:inhomog-bulk} and
satisfies the boundary conditions in Lemma~\ref{lem:w}.

{\it (Uniqueness)} The general solution of \eqref{eq:inhomog-bulk} has the form 
\begin{equation*}
 	u(y) = w(y) + a \tJ(y) + b \tY(y)
\end{equation*}
for arbitrary constants $a$ and $b$.
For this function to be continuous we need $b=0$, and for it to vanish at the origin, we need $a=0$.
\end{proof}

\begin{cor} \label{cor:nonzero}
If $\lambda_0$ satisfies $J_0(2 \sqrt{\lambda_0})=0$
and $g(y)=\tJ(y)$, then the solution
to Eq.~\eqref{eq:inhomog-bulk},  $w(y)$, does not vanish at $y=1$.
\end{cor}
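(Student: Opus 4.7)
The plan is to compute $w(1)$ explicitly using the integral formulas from Lemma~\ref{lem:w} and then check nonvanishing one factor at a time. Plugging $g=\tJ$ into \eqref{eq:w:for:j_0:c1}--\eqref{eq:w:for:j_0:c2} and then into \eqref{eq:w:for:j_0} evaluated at $y=1$ gives
\begin{equation*}
w(1) = -\tJ(1)\int_0^1 \tY(x)\tJ(x)\,dx + \tY(1)\int_0^1 \tJ(x)^2\,dx.
\end{equation*}
The hypothesis $J_0(2\sqrt{\lambda_0})=0$ says exactly that $\tJ(1)=0$, so the first term drops and $w(1) = \tY(1)\int_0^1 \tJ(x)^2\,dx$.

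Next I would argue that neither factor vanishes. The integral $\int_0^1 \tJ(x)^2\,dx$ is strictly positive because $\tJ$ is a nontrivial real-analytic function on $[0,1]$ (it equals $1$ at $y=0$), hence is nonzero on a set of positive measure. For the factor $\tY(1)$, since $\tJ(1)=0$, the definition \eqref{eq:Bessel:short:Y} reduces to $\tY(1)=\pi Y_0(2\sqrt{\lambda_0})$, so I need $Y_0(2\sqrt{\lambda_0})\ne 0$.

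To get this last nonvanishing, I would invoke the standard Wronskian identity for Bessel functions, $J_0(s)Y_0'(s)-J_0'(s)Y_0(s)=2/(\pi s)$, which is strictly nonzero for $s>0$. If $J_0$ and $Y_0$ had a common zero at some $s_\ast=2\sqrt{\lambda_0}>0$, the left-hand side would vanish there, contradicting the identity; hence $J_0(2\sqrt{\lambda_0})=0$ forces $Y_0(2\sqrt{\lambda_0})\ne 0$. Combining the three facts yields $w(1)\ne 0$.

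I do not anticipate a real obstacle here; the only substantive ingredient beyond the formula for $w$ is the classical fact that $J_0$ and $Y_0$ share no positive zeros, which is immediate from the Wronskian and is recorded in Appendix~\ref{subsec:bessel}. The computation is essentially an evaluation-and-check, exploiting the vanishing of $\tJ(1)$ to collapse the expression for $w(1)$ down to a manifestly nonzero product.
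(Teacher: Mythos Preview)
Your proof is correct and follows essentially the same route as the paper: compute $w(1)$ from the variation-of-parameters formula, use $\tJ(1)=0$ to collapse it to $\tY(1)\int_0^1\tJ^2\,dx$, and argue that both factors are nonzero via positivity of the integral and the nonvanishing Wronskian. The paper invokes the Wronskian of $\tJ,\tY$ directly (it equals $1/y$) rather than passing back to $J_0,Y_0$, but this is a cosmetic difference.
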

\begin{proof}
Since $\tJ(1)=0$, we have from \eqref{eq:w:for:j_0} that 
\begin{equation}
  \label{eq:1}
  w(1)= c_2(1) \tY(1)    =\tY(1)\int_0^1\tJ^2(x)\, dx\ne0. 
\end{equation}
Of course the integral is positive, and $\tY$ cannot vanish at $y=1$
since $\tJ$ already vanishes there and the pair's Wronskian is non-zero.
\end{proof}

\subsection{Solution of the Reduced Boundary Layer Equations} 
\label{ssec:BL:Equations}

Equations~\eqref{eq:reduced:equations:V} for the boundary layer at $y=1$, which have constant coefficients, do not require any special discussion.
Therefore, we focus here on the boundary layer at $y=0$.

The homogeneous versions of equations \eqref{eq:reduced:equations:U} have the form
\begin{equation} \label{eq:reduced:equations:U1}
	U'''' - (XU')' = 0. 
\end{equation}
We need to find four linearly independent solutions of this equation.
By inspection, $U^{(1)} \equiv 1$ is one such solution.
Observe that, if we define $\WW=U'$, we may rewrite \eqref{eq:reduced:equations:U1} as
\begin{equation} \label{eq:reduced:equations:U2}
 	\frac{d}{dX} \brk{\WW'' - X\WW }= 0.
\end{equation}
Now $\WW''-X\WW=0$ is Airy's differential equation, whose solution
space is spanned by the Airy functions, $\Ai$ and $\Bi$.
(See Appendix~\ref{subsec:airy} for the definition and some elementary properties of these special functions.)
Thus, 
\begin{equation} \label{eq:U2U3}
 U^{(2)}(X) = \int_0^X \Ai(x)dx, \qquad U^{(3)}(X) = \int_0^X \Bi(x)dx
\end{equation}
provide two more linearly independent solutions.
Since $\Bi$ grows super-exponentially as $X \to \infty$, the solution $U^{(3)}$ cannot be matched to any solution in the bulk.
By contrast, $\Ai$ \emph{decays} super-exponentially so the integral to infinity converges; in fact, by  \eqref{eq:Airy:Integrals}
\begin{equation} \label{Ai-infty}
 \lim_{X \to \infty} U^{(2)}(x) = 1/3.
\end{equation}
Incidentally, for use in the boundary conditions
(\ref{eq:boundary:conditions}, \ref{eq:extraBC}), we claim that
\begin{equation} \label{eq:U2BC}
 U^{(2)}(0) = 0, \qquad
U^{(2)}{}''(0) = \Ai'(0) \neq 0, \qquad
U^{(2)}{}'''(0) = 0.
\end{equation}
The first relation is trivial; the second may be derived by differentiating
\eqref{eq:U2U3} twice; and third may be derived by differentiating
\eqref{eq:U2U3} thrice and invoking Airy's differential equation.

To obtain a fourth independent solution, we satisfy \eqref{eq:reduced:equations:U2} by requiring that $\WW''-X\WW=-1$, or since $\WW=U'$,
\begin{equation} \label{eq:reduced:equations:U4}
 	 	U''' - XU' = -1.
\end{equation}
Specifically we let $U^{(4)}= \Psi$ where $\Psi$ satisfies the following
\begin{lemma} \label{lem:psi}
There is a unique solution $\Psi(X)$ of \eqref{eq:reduced:equations:U4} such that 
\begin{alignat*}{2}
\text{(a) } \; \Psi(0) &= 0,  \quad\text{(b) } \; \Psi''(0)=0  \quad \text{ and } \\
 \qquad \text{(c) }  \;  \Psi(X) &= \log(X) + \Oh(1) \; \text{ as } \; X\goto\infty.&
\end{alignat*}
\end{lemma}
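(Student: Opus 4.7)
The plan is to reduce the order of \eqref{eq:reduced:equations:U4} by setting $W=\Psi'$, converting it to the inhomogeneous Airy equation $W''-XW=-1$. Under this substitution condition (b) becomes $W'(0)=0$, while condition (c) requires $W(X)=1/X+o(1/X)$ as $X\goto\infty$, since integrating such a $W$ from $1$ yields $\log X+\Oh(1)$.

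First I would construct a distinguished particular solution $W_p$ of $W''-XW=-1$ with the prescribed $1/X$ behaviour at infinity. Variation of parameters against $\{\Ai,\Bi\}$, whose Wronskian is $1/\pi$, produces a two-parameter family of particular solutions of the form
\begin{equation*}
\pi\,\Ai(X)\int_a^X\Bi(s)\,ds-\pi\,\Bi(X)\int_b^X\Ai(s)\,ds.
\end{equation*}
The crucial move is to take $a=0$ and $b=\infty$, which is legitimate because $\int_0^\infty\Ai(s)\,ds$ converges by \eqref{Ai-infty}; this produces
\begin{equation*}
W_p(X)=\pi\,\Ai(X)\int_0^X\Bi(s)\,ds+\pi\,\Bi(X)\int_X^\infty\Ai(s)\,ds,
\end{equation*}
essentially $\pi$ times Scorer's $\mathrm{Gi}$ function. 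A short calculation with the classical Airy asymptotics (the leading-order cancellation $\Ai(X)\Bi(X)\sim 1/(2\pi X^{1/2})$ combined with integration by parts on the tail integrals) then gives $W_p(X)=1/X+\Oh(X^{-4})$. The main obstacle of the proof lies precisely in this step: with any other choice of limits the $\Bi$ piece inherits super-exponential growth and destroys condition (c).

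Second, the general solution of \eqref{eq:reduced:equations:U4} is obtained by reintegrating $W$ once, yielding a three-parameter family
\begin{equation*}
\Psi(X)=\int_0^X W_p(s)\,ds+c_1+c_2\,U^{(2)}(X)+c_3\,U^{(3)}(X),
\end{equation*}
where $U^{(2)},U^{(3)}$ are the Airy antiderivatives from \eqref{eq:U2U3}. Condition (c) forces $c_3=0$, because $U^{(3)}$ inherits the super-exponential growth of $\Bi$, whereas the distinguished $W_p\sim 1/X$ ensures $\int_0^X W_p(s)\,ds=\log X+\Oh(1)$ and $U^{(2)}$ is bounded at infinity by \eqref{Ai-infty}.

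Finally, conditions (a) and (b) reduce to a triangular $2\times 2$ linear system for $(c_1,c_2)$. Using \eqref{eq:U2BC}, $U^{(2)}(0)=0$, so (a) gives $c_1=0$; and $U^{(2)}{}''(0)=\Ai'(0)\ne 0$, so (b) reads $W_p'(0)+c_2\,\Ai'(0)=0$ and uniquely determines $c_2$. Existence follows from the explicit formula, uniqueness from the sequential determination $c_3,c_1,c_2$.
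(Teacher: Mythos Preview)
Your argument is correct, but it takes a genuinely different route from the paper's. The paper handles existence abstractly: it observes that \eqref{eq:reduced:equations:U4} has an irregular singular point at $X=\infty$, writes down the formal asymptotic series $\log X+\sum_{k\ge1}c_kX^{-3k}$, and invokes (implicitly) the general theory that some actual solution realizes this expansion; then it subtracts multiples of $U^{(1)}\equiv1$ and $U^{(2)}=\int_0^X\Ai$ to enforce (a) and (b). Uniqueness is argued by noting that the difference of two candidates lies in the span of $U^{(1)},\dots,U^{(4)}$, that growth kills $U^{(3)},U^{(4)}$, and that (a),(b) kill the rest. Your approach is more constructive: you differentiate once to reach the inhomogeneous Airy equation, build the distinguished particular solution $W_p=\pi\,\mathrm{Gi}$ explicitly by variation of parameters with the limits chosen so that the $\Bi$-piece cannot blow up, verify the $1/X$ asymptotics directly, and then reintegrate. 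This buys you an explicit formula for $\Psi$ and avoids appealing to the existence theory at irregular singular points, at the cost of a short Airy-asymptotics computation. One small wording issue: saying that condition~(c) ``requires'' $W(X)=1/X+o(1/X)$ overstates things, since $\Psi=\log X+\Oh(1)$ does not by itself force this on $\Psi'$; but you only use the implication in the sufficient direction (your $W_p=1/X+\Oh(X^{-4})$ integrates to $\log X+\Oh(1)$), so the logic stands.
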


This lemma is proved in Appendix~\ref{app:proof:Psi}, and  
Figure~\ref{fig:Psi} shows the graph of $\Psi$, obtained numerically.
Incidentally, the numerics indicate that as $X \to \infty$
\begin{equation} \label{eq:U4-value}
 \Psi(X) = \log(X) + \Psi_\infty + o(1)  \qquad \text{where} \qquad
 	\Psi_\infty \approx 1.3556,
\end{equation}
while it follows from \eqref{eq:reduced:equations:U4} that
\begin{equation} \label{eq:U4-der}
	\Psi'''(0)=-1.
\end{equation}

Below we also solve inhomogeneous versions of equations~\eqref{eq:reduced:equations:U}.

\section{Asymptotics for the Clamped Case}
\label{sec:clamped}
In this section, we calculate terms in the asymptotic series for the
clamped case through the first non-trivial correction to the eigenvalue: i.e., order $1/2$.
The results are summarized in Table~\ref{tab:asymptotic:series:clamped}.

\begin{table}[h!]
\setlength{\extrarowheight}{3pt} 
\centerline{
\begin{tabular}{|C|*{4}{C}|}
\hline
\alpha& \lambda_\alpha &u_\alpha&U_\alpha&V_\alpha\\
\hline
0&\lambda_0& \tJ(y)& 1& 0\\
\tfrac13& 0&0 &-\lambda_0 X&0\\
\tfrac12&\lambda_0& - \lambda_0 w(y)& 0& \tJ'(1)\brk{1-Z-e^{-Z} }\\[1ex]
\hline
\end{tabular}}
\caption{Coefficients of the asymptotic expansion for the clamped
  case.  The leading-order eigenvalue $\lambda_0$ is a root of the equation $J_0(2 \sqrt{\lambda_0}) =0$. $\; \tJ$ and $w$ are defined in \eqref{eq:Bessel:short:J} and \eqref{eq:w-third}, respectively.}
\label{tab:asymptotic:series:clamped}
\end{table}


\subsection{Zeroth-Order Solution}
\label{subsec:clamped:0}
\mysubsubsec{Boundary-layer near $y=0$}
As discussed in Section~\ref{ssec:BL:Equations}, the solution of the $\eps^0-$equation in \eqref{eq:reduced:equations:U} is a linear combination
\begin{equation} \label{eq:U_0:gen}
	U_0(X) = A +B \int_0^X \Ai(x)dx + C \Psi(X),
\end{equation}
the integral of $\Bi$ having been excluded as unsuitable for matching.
To satisfy the boundary conditions~(\ref{eq:boundary:conditions}, \ref{eq:extraBC}), we require
\begin{equation}  \label{U_0:bc}
	U_0(0)=1, \qquad U_0''(0)=0, \qquad U_0'''(0)=0.
\end{equation}
By substituting (\ref{eq:U_0:gen}) into the boundary conditions and
recalling the derivatives of $\int \Ai$ and $\Psi$ at $X=0$, we deduce
that $A=1, B=C=0$, that is,
\begin{equation}
  \label{eq:U_0}
  U_0(X) \equiv 1.
\end{equation}

\mysubsubsec{Boundary-layer near $y=1$}
The four-dimensional solution space of the $\eps^0$ equation in \eqref{eq:reduced:equations:V} is spanned by $1, Z, e^{-Z}, e^{Z}$.
Excluding $e^Z$ as unsuitable for matching, we write $  V_0(Z)=A + B Z + C e^{-Z}$.
The boundary conditions \eqref{eq:boundary:conditions} for the clamped case,
\begin{equation}
 V_0(0)=0 \qquad V_0'(0)=0,
\end{equation}
allow us to express two of the three arbitrary constants in terms of the third, yielding 
\begin{equation}
  \label{eq:V_0:clamped:form}
  V_0(Z)=A (1 - Z - e^{-Z}).
\end{equation}
The undetermined coefficient $A$ will be found by matching with the bulk solution.

\mysubsubsec{Bulk} 
We already know that the general solution of the $\eps^0-$equation  \eqref{eq:reduced:equations:u} in the bulk is
\begin{equation}
  \label{eq:bulk:0:linear}
  u_0(y)=a \tJ(y) + b \tY(y).
\end{equation}
The two constants will be determined in matching.

\mysubsubsec{Matching}
To match the bulk solution with that of the boundary layers, we compare the ``outer limit'' of the
inner solutions (the boundary-layer solutions) to the ``inner limit''
of the outer solution (the solution in the bulk). 
Thus, near $y=0$ we need, as $\eps \to 0$,
\begin{equation}
  \label{eq:matching:0:0}
  u_0(\eps^{1/3}X)- U_0(X)=o(1) 
\end{equation}  
for an appropriate range of $X$.
Specifically, we require that there exist numbers $p,q$, with $0 \le p < q \le 1/3$, such that (\ref{eq:matching:0:0}) holds for all $X$ 
such that
\begin{equation} \label{rangeX}
 \eps^{-p} \ll X \ll \eps^{-q}.
\end{equation}
In this case we may take $p=0, q=1/3$; i.e., the maximal range.
It follows from (\ref{eq:bulk:0:linear}) that $u_0(y) = a + b \log y + \Oh(y \log y)$ for small $y$.
Hence, since $\eps^{1/3}X \ll 1$,
\begin{equation} 
\label{d1}
 u_0(\eps^{1/3}X) = a + b \log (\eps^{1/3}X) + o(1).
\end{equation}
Unless $b=0$, the logarithm term in (\ref{d1}) is large and moreover depends on $X$.
Therefore for $u_0$ to match onto $U_0(X) \equiv 1$, we need $a=1$ and
$b=0$.

Near $y=1$ we need
\begin{equation}
  \label{eq:matching:0:1}
u_0(1-\eps^{1/2} Z) - V_0(Z)=o(1) \qquad \text{for $Z$ in a range} \qquad \eps^{-p} \ll Z \ll \eps^{-q}
\end{equation}
where  $0 \le p < q \le 1/2$.
We take $p=0, q=1/2$.
Given $a,b$ as above, $u_0(1-\eps^{1/2} Z) = \tJ(1) +o(1)$
while  $ V_0(Z)= A (1 - Z) + o(1)$.
Thus matching the linear term in $V_0$ requires that $A=0$, and then matching the constant terms requires that $\tJ(1)=0$.
Finally, extracting the implicit $\lambda_0$ from the argument of $\tJ$, we obtain the characterization of $\lambda_0$
\begin{equation}
 J_0(2\sqrt{\lambda_0})=0.
\end{equation}
This confirms the \emph{ad hoc} solution we found in Subsection~\ref{subsec:naive}, and it verifies the first row of Table~\ref{tab:asymptotic:series:clamped}.

\subsection{The $\eps^{1/3}$ Correction}
Not much happens at this order, but it serves as practice for later calculations.

\mysubsubsec{Boundary-layer near $y=0$}
At order $\eps^\third$, equation \eqref{eq:reduced:equations:U} is inhomogeneous with the right-hand-side $\lambda_0 U_0 \equiv \lambda_0$, which has the particular solution $U_{p}(X) = - \lambda_0 X$.
$U_\third$ must satisfy boundary conditions analogous to
(\ref{U_0:bc}), except that now $U_\third(0) = 0$ replaces the
condition $U_0(0)=1$. 
Since $U_p$ already satisfies the boundary conditions, we have that
\begin{equation}
  \label{eq:U_third}
  U_\third(X) = - \lambda_0 X.
\end{equation}
As we shall see below, this term merely matches the first derivative
of the bulk solution $u_0$ at $y=0$.

\mysubsubsec{Boundary-layer near $y=1$}
Arguing as in the $\eps^0$ case in Subsection~\ref{subsec:clamped:0}, we deduce that 
\begin{equation}
  \label{eq:V_third:clamped:form}
  V_\third(Z)=A (1 - Z - e^{-Z}).
\end{equation}

\mysubsubsec{Bulk}
The $\eps^{1/3}$ equation \eqref{eq:reduced:equations:u} has the inhomogeneous term $-\lambda_\third u_0$.
Thus, the general solution of this equation is 
\begin{equation}
  \label{eq:bulk:third:linear}
  u_\third(y)= - \lambda_\third w(y) +a \tJ(y) + b \tY(y)
\end{equation}
where we define $w$ as the solution (see Lemma~\ref{lem:w}) of 
\begin{equation} \label{eq:w-third}
(y w')' + \lambda_0 w=  \tJ \qquad \text{such that} \qquad w(0) = 0.
\end{equation}

\mysubsubsec{Matching}
Near $y=0$ we need, as $\eps \to 0$,
\begin{equation}
  \label{eq:matching:third:0}
	(u_0 + \eps^{1/3} u_\third)(\eps^{1/3}X) - (U_0 + \eps^{1/3} U_\third)(X) = o(\eps^{1/3})
\end{equation}  
for $X$ in a range $\eps^{-p} \ll X \ll \eps^{-q}$ where $0 \le p < q \le 1/3$.
We take $p=0, q=1/6$ so that $(\eps^\third X)^2 = o(\eps^\third)$.
Then by Taylor expansion at $y=0$,
\begin{equation} \label{eq:taylor:u:0}
 u_0(\eps^{1/3}X) = u_0(0) + \eps^{1/3} u_0'(0) X + o(\eps^{1/3}).
\end{equation}
Therefore, obtaining $u_0'(0)$ from  \eqref{eq:taylor:J_0} in the
Appendix, and recalling that $w(0)=0$ in (\ref{eq:bulk:third:linear}), we find%
\footnote{The term $\log (\eps^{1/3}X)$ in this equation is a little disturbing.  One expects the function that multiplies $\eps^\third$
to depend on $X$ alone, not $\eps$.  This confusing behavior, a consequence of the singularity of (\ref{eq:reduced}) at $y=0$, is not an issue 
here since the matching implies that $b=0$.  However, exactly this
complication will force us to add a $\eps \log \eps$ term to the series
in Section~\ref{sec:pinned} below.}
\begin{equation}
 (u_0 + \eps^{1/3} u_\third)(\eps^{1/3}X) = 1 + \eps^{1/3}\prn{-\lambda_0 X  +a+b\log (\eps^{1/3}X)} + o(\eps^{1/3}).
\end{equation}
On the other hand, 
\begin{equation}
 (U_0 + \eps^{1/3} U_\third)(X) = 1 -\eps^{1/3}\lambda_0 X.
\end{equation}
Thus (\ref{eq:matching:third:0}) requires that $a=b=0$.

Near $y=1$ we need
\begin{equation}
  \label{eq:matching:third:1}
(u_0 + \eps^{1/3} u_\third)(1-\eps^{1/2} Z) - (V_0 + \eps^{1/3} V_\third)(Z)=o(\eps^{1/3}) 
\end{equation}
for $Z$ in a range  $\eps^{-p} \ll Z \ll \eps^{-q}$ where  $0 \le p < q \le 1/2$.
We take $p=0, q=1/6$ so that $\eps^\half Z = o(\eps^\third)$.
Now by a Taylor expansion near y=1
\begin{equation} \label{eq:taylor:u:1}
  u_0(y) = - \tJ'(1) (1-y) + \Oh\prn{(1-y)^2},
\end{equation}
so by our choice of $q$
\begin{equation}
 u_0(1-\eps^{1/2} Z) = \Oh(\eps^{1/2} Z) = o(\eps^\third).
\end{equation}
Therefore,
\begin{equation}
 (u_0 + \eps^{1/3} u_\third)(1-\eps^{1/2} Z) =  \eps^{1/3} u_\third(1) +o(\eps^\third) =
 - \eps^{1/3} \lambda_\third w(1) + o(\eps^{1/3})
\end{equation}
where we have recalled that $a=b=0$ in (\ref{eq:bulk:third:linear}).
On the other hand, $V_0 \equiv 0$, so by (\ref{eq:V_third:clamped:form}), as $Z \to \infty$
\begin{equation}
 (V_0 + \eps^{1/3} V_\third)(Z) = A \eps^{1/3} (1-Z) + o(\eps^{1/3} ).
\end{equation}
Thus matching requires that $A=0$ and $\lambda_\third w(1)=0$.
Recalling from Corollary~\ref{cor:nonzero} that $w(1) \ne 0$, we obtain
$\lambda_\third =0$, and we have verified the second line of Table~\ref{tab:asymptotic:series:clamped}.


\subsection{The $\eps^\half$ Correction}

\mysubsubsec{Boundary-layer near $y=0$}
Equation \eqref{eq:reduced:equations:U}, which at order 1/2 is homogeneous, and the three homogeneous boundary conditions imply 
that $U_\half \equiv 0$.

\mysubsubsec{Boundary-layer near $y=1$}
Since $V_0 \equiv 0$, equation (\ref{eq:reduced:equations:V}) at order 1/2 is homogeneous.
By the same argument as for $V_0$ and $V_\third$, this equation and the boundary conditions yield
\begin{equation}
  \label{eq:1:1/2:general:2}
 V_\half(Z)= A(1 - Z - e^{-Z}).
\end{equation} 

\mysubsubsec{Bulk}
The $\eps^\half$ equation of \eqref{eq:reduced:equations:u}  has the
same family of solutions as in order $\eps^\third$: 
\begin{equation}
  \label{eq:bulk:1/2:general}
 u_\half(y)= -\lambda_\half w(y)+a \tJ(y)+b\tY(y)
\end{equation}
where $w$ is the solution of \eqref{eq:w-third}.

\mysubsubsec{Matching}
Matching at $y=0$ as above, we deduce that $a=b=0$ in \eqref{eq:bulk:1/2:general}.

At $y=1$ we need
\begin{equation}
  \label{eq:matching:half:1}
(u_0 + \eps^{1/3} u_\third + \eps^\half u_\half)(1-\eps^{1/2} Z) - (V_0 + \eps^{1/3} V_\third + \eps^\half V_\half)(Z)=o(\eps^{1/2})
\end{equation}
for $Z$ in a range $\eps^{-p} \ll Z \ll \eps^{-q}$ where  $0 \le p < q \le 1/2$.
We choose $p=0, q=1/4$ so that $(\eps^\half Z)^2 = o(\eps^\half)$.
Now by \eqref{eq:taylor:u:1} and (\ref{eq:bulk:1/2:general}), 
\begin{equation} \label{first-term:half}
 (u_0 + \eps^{1/3} u_\third + \eps^\half u_\half)(1-\eps^{1/2} Z) = \eps^\half \brk{-\tJ'(1)Z - \lambda_\half w(1)} +o(\eps^\half),
\end{equation}
while letting $Z \to \infty$ in (\ref{eq:1:1/2:general:2}) we deduce
\begin{equation} \label{second-term:half}
 (V_0 + \eps^{1/3} V_\third + \eps^\half V_\half)(Z) = \eps^\half A (1-Z) + o(\eps^\half).
\end{equation}
To match the expressions in \eqref{eq:matching:half:1} we need $A=\tJ'(1)$ and 
\begin{equation} \label{eq:lambda-half}
	\lambda_\half = - \tJ'(1) / w(1).
 \end{equation}
Remarkably, it follows from Lemma~\ref{lem:lambda_1/2} that $\lambda_\half = \lambda_0$.
The verification of all entries in Table~\ref{tab:asymptotic:series:clamped} is now complete.

As a check on our calculations, in Figure~\ref{fig:2_term:error:clamped} below we present a log-log plot of the error in the two-term approximation 
$\lambda_0 + \lambda_\half \eps^\half$ to the clamped eigenvalues.  
The resulting lines of slopes near 1 suggest that the next
non-vanishing correction to the eigenvalue will happen at the
\(\Oh(\eps)\) level. 
An exercise at the
end of Section~\ref{sec:pinned} includes verifying this point.

\begin{lemma}
\label{lem:lambda_1/2}
The function $w$ satisfies
\begin{equation*}
 w(1) = - \frac{\tJ'(1)}{\lambda_0}
\end{equation*}
\end{lemma}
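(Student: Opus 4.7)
The plan is to use the explicit integral representation for $w$ from the proof of Lemma~\ref{lem:w} (specialized to $g = \tJ$) and then evaluate the two resulting factors using the Wronskian relation $\cW(y)=1/y$ and a standard Bessel-function integral identity.

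First I would substitute $g = \tJ$ into \eqref{eq:w:for:j_0:c1}--\eqref{eq:w:for:j_0:c2}, obtaining $c_1(y) = -\int_0^y \tY(x)\tJ(x)\,dx$ and $c_2(y) = \int_0^y \tJ(x)^2 dx$, so that
\begin{equation*}
  w(1) = c_1(1)\tJ(1) + c_2(1)\tY(1) = \tY(1)\int_0^1 \tJ(x)^2\, dx,
\end{equation*}
where I used $\tJ(1) = J_0(2\sqrt{\lambda_0}) = 0$ to kill the first term.

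Next I would evaluate the two factors separately. For $\tY(1)$, evaluating the Wronskian identity $\tJ \tY' - \tY \tJ' = 1/y$ at $y=1$ and using $\tJ(1)=0$ gives $\tY(1)\tJ'(1) = -1$. For the integral, the substitution $s = 2\sqrt{\lambda_0 y}$ converts it to $(2\lambda_0)^{-1}\int_0^{2\sqrt{\lambda_0}} s\,J_0(s)^2\, ds$, and the classical identity $\int_0^\alpha s J_0(s)^2\, ds = \tfrac{\alpha^2}{2}\bigl(J_0(\alpha)^2 + J_1(\alpha)^2\bigr)$, evaluated at $\alpha = 2\sqrt{\lambda_0}$ (where $J_0$ vanishes), gives $\int_0^1 \tJ^2\, dy = J_1(2\sqrt{\lambda_0})^2$. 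Since $J_0' = -J_1$, the chain rule gives $\tJ'(1) = -\sqrt{\lambda_0}\,J_1(2\sqrt{\lambda_0})$, so $\int_0^1 \tJ^2\, dy = (\tJ'(1))^2/\lambda_0$. Combining,
\begin{equation*}
  w(1) = \tY(1)\cdot\frac{(\tJ'(1))^2}{\lambda_0} = \bigl(\tY(1)\tJ'(1)\bigr)\frac{\tJ'(1)}{\lambda_0} = -\frac{\tJ'(1)}{\lambda_0},
\end{equation*}
as required.

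The only real obstacle is bookkeeping of Bessel normalizations and signs (the factor $2\sqrt{\lambda_0}$ coming from the argument of $J_0$, and the sign in $J_0'=-J_1$); the structural content is just the observation that the nonvanishing part of $w(1)$ is the $\tY$-coefficient, whose explicit form is an $L^2$-norm. As a sanity check, one can obtain the same conclusion by a coordinate-free Green's identity: multiplying $(yw')' + \lambda_0 w = \tJ$ by $\tJ$, subtracting $w\cdot((y\tJ')'+\lambda_0 \tJ)=0$, and integrating over $[0,1]$ yields $\int_0^1 \tJ^2\, dy = -w(1)\tJ'(1)$ (the boundary terms at $y=0$ vanish since $yw'$ and $y\tJ'$ are both $o(1)$ there, and $\tJ(1)=0$ kills two terms at $y=1$); combined with $\int_0^1 \tJ^2\, dy = (\tJ'(1))^2/\lambda_0$ this again gives the stated identity.
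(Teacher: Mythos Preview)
Your main argument is correct and follows essentially the same route as the paper's proof: both start from the variation-of-parameters representation \eqref{eq:w:for:j_0} specialized to $g=\tJ$, kill the $c_1$-term using $\tJ(1)=0$, reduce the Bessel integral $\int_0^1\tJ^2$ to $J_1^2(2\sqrt{\lambda_0})=\tJ'(1)^2/\lambda_0$ via the standard identity (the paper cites \cite[11.3.34]{AbramowitzStegun64}, you derive it by the substitution $s=2\sqrt{\lambda_0 y}$), and finish with the Wronskian relation $\tY(1)\tJ'(1)=-1$. The only cosmetic difference is the order in which you invoke the Wronskian and the integral identity.

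Your Green's-identity sanity check is a genuinely different and somewhat cleaner alternative that the paper does not give: it bypasses the explicit variation-of-parameters formula entirely and obtains $\int_0^1\tJ^2 = -w(1)\tJ'(1)$ directly from integrating $\tJ\cdot\bigl[(yw')'+\lambda_0 w\bigr]-w\cdot\bigl[(y\tJ')'+\lambda_0\tJ\bigr]$ over $[0,1]$, needing only the Bessel integral identity to close. This is arguably more transparent, since it makes clear that the result depends only on the self-adjoint structure of the operator and the eigenfunction normalization, not on the particular choice of second homogeneous solution $\tY$.
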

\noindent 

\begin{proof}
Recall formula \eqref{eq:1} for $w(1)$.
Manipulating \cite[11.3.34]{AbramowitzStegun64}, we find that 
\begin{equation}
\int_0^1\tJ^2(\tau)d\tau= \int_0^1J_0^2(2\sqrt{\lambda_0\tau}) d\tau
=J_0^2( 2\sqrt{\lambda_0}) + J_1^2(2\sqrt{\lambda_0}).
\label{eq:int:j_0_2}
\end{equation}
Since $\lambda_0$ is a root of  $J_0( 2\sqrt{ \lambda_0})$, we may drop the first term.
Regarding the second we invoke \cite[9.1.28]{AbramowitzStegun64} to obtain
\begin{equation} 
 J_1^2(2\sqrt{\lambda_0})=J_0'^2(2\sqrt{\lambda_0})=\frac{1}{\lambda_0}\tJ'^2(1).
\label{eq:int:j_0_2:2}
\end{equation}
Thus we obtain $w(1)=\tY(1)\tJ'(1)\frac{\tJ'(1)}{\lambda_0}$.
Since $\tJ(1)=0$, 
\begin{equation} \label{prod-JprimeY}
 \tY(1)\tJ'(1) = -\tJ(1)\tY'(1)+\tY(1)\tJ'(1) = -\cW(1)=-1,
\end{equation}
where we have taken the Wronskian from Appendix~\ref{subsec:bessel}.
\end{proof}

\subsection{Comparisons of Various Approximate Eigenfunctions}
\label{subsec:composite:approximation}
The outer and inner solutions in our asymptotic expansions of the eigenfunctions may be combined into a 
composite expansion that gives a uniformly accurate approximation in both regions.
(See \cite[Section~5.1.8]{Hinch1991}.)
This may be formed by adding the inner and outer approximations, subtracting the common part in the matching region, and expressing
the result as a function of the outer variable.
This process is trivial at order 0: 
at $y=1$ the inner approximation, and hence the common part, vanishes, while
at $y=0$, the inner approximation is nonzero, but it equals the common part, so the uniform
approximation is simply the outer solution.
It is similarly trivial at order 1/3, so let us proceed to order 1/2, where the behavior near $y=1$ is interesting.

Inner and outer solutions are given in Table~\ref{tab:asymptotic:series:clamped}, and the common part of these expansions in the
matching region near $y=1$ is given by \eqref{first-term:half} or \eqref{second-term:half}, 
\begin{equation}
 \eps^\half \tJ'(1)\brk{1-Z}.
\end{equation}
Thus, subtracting off the common part simply cancels the two polynomial terms in the inner solution,
leaving only
the exponential.
Therefore, at order 1/2, the composite approximation of the eigenfunctions is 
\begin{equation} \label{comp-sect:unif}
 \UU_\half(y) = \tJ(y)  -\eps^\half \lambda_0 \brk{ w(y) - w(1) e^{-(1-y)/\eps^\half}}.
\end{equation}
Here we have used Lemma~\ref{lem:lambda_1/2} to rewrite the coefficient of the exponential to make 
it obvious that $\UU_\half(0)=0$.
Although this boundary condition is satisfied exactly, the derivative boundary condition $u'(0)=0$
is satisfied only to leading order; specifically $\UU_\half'(0) = \Oh(\eps^\half)$.
Such loss of accuracy in taking derivatives cannot be avoided.

Fig~\ref{fig:comp-func-compare-1/2} shows the order-0 outer, the
order-1/2 composite, and the order-1/2 inner approximations together with the numerical approximation to the first eigenfunction, while
Figure~\ref{fig:comp-deriv-compare-1/2} graphs the derivatives of the
order-0 outer approximation and the order-1/2 composite approximation,
all for $\eps = 10^{-2}$.
In anthropomorphic terms, $\UU_\half \,$ ``attempts'' to correct for the fact that the zeroth-order outer 
solution has nonzero derivative at $y=1$.
Thus, $\UU_\half$ is lowered in the interior of the interval so that it may approach $y=1$ with nearly zero slope.
Incidentally, the increase in the eigenvalue $\eps^\half \lambda_\half$ is needed to drive $\UU_\half$ towards zero more rapidly in the 
interior.

\begin{figure}[th]
\subfloat[Function.]{\scalebox{.8}{\includegraphics{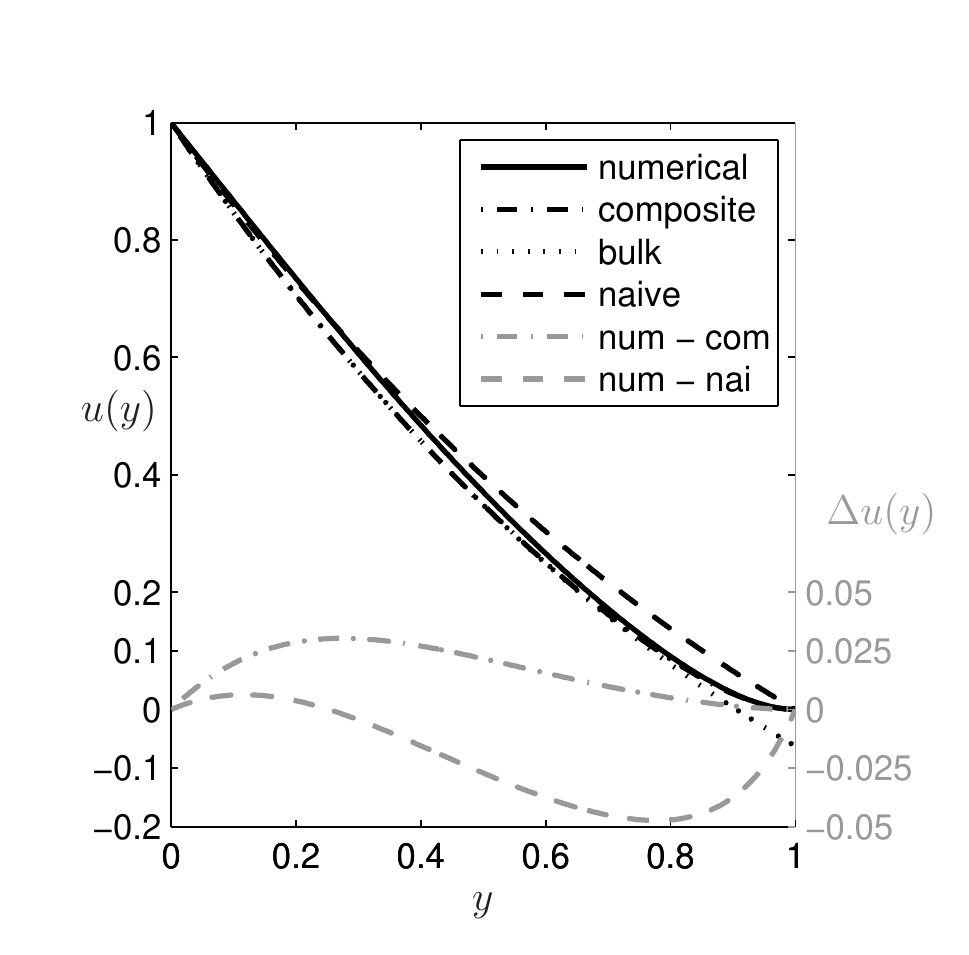}\label{fig:comp-func-compare-1/2}}}
\hfill
\subfloat[Derivative.]{\scalebox{.8}{\includegraphics{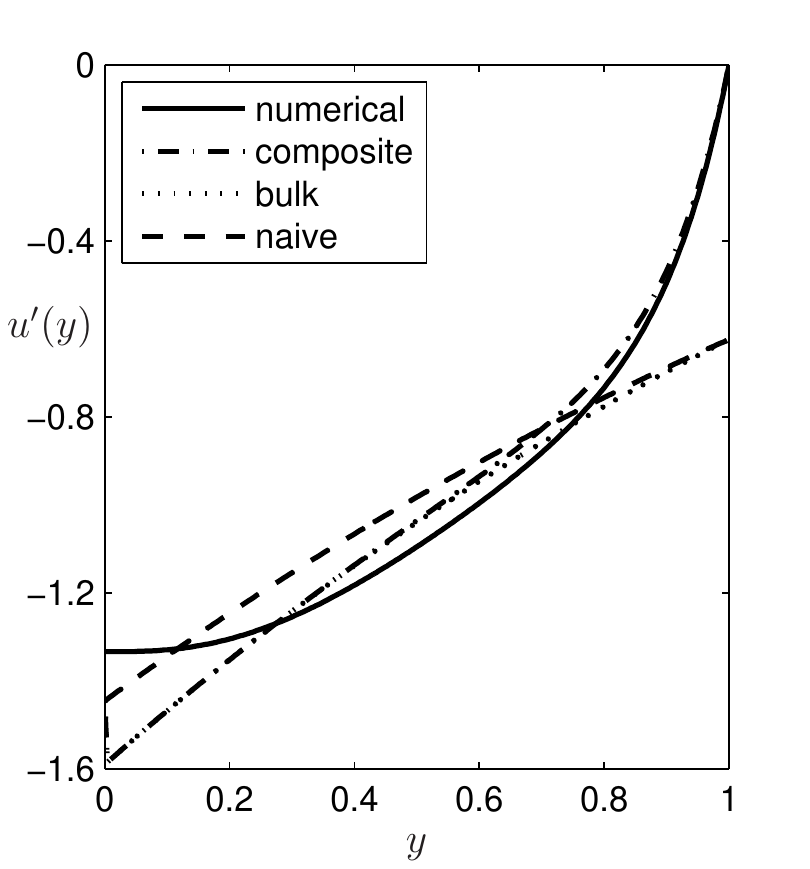}\label{fig:comp-deriv-compare-1/2}}}
\caption{Comparison of numerical, naive, bulk, and composite
  approximations of order 1/2 to the first eigenfunction, for $\eps=10^{-2}$. Both function values and first derivative are shown; in Fig~3a,  
differences are overlaid for clarification.}
\end{figure}

In Fig~\ref{fig:comp-func-compare-1/2} it may be seen that  $\UU_\half$ differs noticeably from the numerically computed eigenfunction
in the interior of the interval.
This difference is corrected by the order-2/3 terms in the series, which are driven by the boundary layer at $y=0$.
(The order-2/3 corrections for clamped boundary conditions are the same
as for the pinned case, which are calculated in Subsection~\ref{subsec:pinned:2/3}.) 
In other words, for application to the flag problem mentioned in the introduction, it is unwise to
neglect the order-2/3 corrections to the eigenfunction.

\subsection{Lessons Learned}

Having achieved our initial goal, let us pause to describe patterns in the calculations:
Suppose all terms $u_\gamma,U_\gamma,V_\gamma,\lambda_\gamma$ of order $\Oh(\eps^\gamma)$ for $\gamma < \alpha$ have been calculated, and 
consider what is needed to calculate the terms of order $\Oh(\eps^\alpha)$.

\mysubsubsec{Boundary-layer near $y=0$}
The equation (\ref{eq:reduced:equations:U}) for $U_\alpha$,
\begin{equation} \label{eq:general:U}
    U'''' -(XU')' = \text{RHS},
\end{equation}
has a four-dimensional solution space.
However, because $\int \, \Bi$ is excluded as unsuited for matching, only a three-dimensional space is available for forming  $U_\alpha$.
This function must satisfy the three boundary conditions at $X=0$ derived from (\ref{eq:boundary:conditions},\ref{eq:extraBC});
one expects these boundary conditions to determine  $U_\alpha$ uniquely.

The particular and homogeneous solutions of (\ref{eq:general:U}) play different roles.
The particular solution, which has polynomial growth as $X \to \infty$,
matches onto derivatives of lower-order terms in the bulk series.  
(For example, the particular solution $-\lambda_0 X$ in $U_{\third}$ matched onto $u_0(\eps^{\third}X)$.)
The homogeneous solution, a linear combination $A  +B \int \Ai+ C \Psi(X)$, matches onto $u_\alpha$, 
the bulk solution of the same order.  Note that as $X\to\infty$
\begin{equation}
 A +B \int_0^X \Ai(x) \, dx+ C \Psi(X) = C \log X + (A +B/3 +C \Psi_\infty) + o(1).
\end{equation}
The three-dimensional parameter space is constrained in two directions by
matching onto $u_\alpha$. 
The remaining degree of freedom provides the additional flexibility needed to satisfy all the boundary conditions.

\mysubsubsec{Boundary-layer near $y=1$}
The equation \eqref{eq:reduced:equations:V} for $V_\alpha$,
\begin{equation} \label{eq:general:V}
 V'''' - V'' = RHS,
\end{equation}
has a three-dimensional space of solutions appropriate for matching.
Solutions of the homogeneous equation are spanned by $1,Z,e^{-Z}$.
The constant function matches onto $u_\alpha$, the bulk solution of the same order; $Z$ matches onto 
$u_{\alpha-\half}(1-\eps^{\half}Z)$; and $e^{-Z}$ provides the flexibility needed to satisfy all boundary conditions without contributing 
to the matching.
If $\alpha \ge 1$, then a particular solution of \eqref{eq:general:V} will also be needed to match onto derivatives of 
$u_{\alpha-1}, u_{\alpha-3/2}, \ldots \,$.

The general solution of  \eqref{eq:general:V} has the form
\begin{equation} \label{eq:general:sol:V}
 V_\alpha(Z) = A +BZ + C e^{-Z} + V_{p}(Z).
\end{equation}
The constants may be determined by satisfying three equations: two
come from the boundary conditions at $Z=0$ and the third arises from matching terms proportional 
to $Z$ as $Z \to \infty$ with the derivative of $u_{\alpha-\half}$ at $y=1$.
Once $V_\alpha$ is determined, matching to $u_\alpha$ yields an effective boundary condition for $u_\alpha$ at $y=1$.

It is interesting to compare matching at the two end points.
In both cases, matching constrains the inner solution in two directions.
At $y=0$, both directions relate to $u_\alpha$, the outer solution at the order being calculated.
In contrast, at $y=1$, one direction relates to $u_\alpha$ and the other to $u_{\alpha-\half}$.

\mysubsubsec{Bulk}
In the equation \eqref{eq:reduced:equations:u} for $u_\alpha$, let us split off the term on the right proportional to the yet-to-be-calculated coefficient $\lambda_\alpha$:
\begin{equation*} 
    (yu')' + \lambda_0 u = -\lambda_\alpha u_0 + R.
\end{equation*}
The general solution of this equation has the form
\begin{equation} \label{eq:general:sol:u}
 u_\alpha = -\lambda_\alpha w + u_{p} +a \tJ +b\tY,
\end{equation}
where $w$ is the solution of \eqref{eq:w-third} and $a,b$ are arbitrary.
If it weren't for the complications arising from the logarithmic behavior of $\tY$ near $y=0$, the remaining steps would be extremely 
simple:
Matching $u_\alpha$ to $U_\alpha$ at $y=0$ provides two equations, which we may use to 
determine $a$ and $b$ in \eqref{eq:general:sol:u}. 
Then the effective boundary condition from matching at $y=1$
provides a linear equation for $\lambda_\alpha$, 
completing the calculation to order $\Oh(\eps^\alpha)$.
These simple ideas suffice until we encounter order $\alpha=1$; even then, the complications are rather mild.

\mysubsubsec{Matching}
To match at $y=0$ we need
\begin{equation} \label{dgs1}
 \sum_{\gamma\le \alpha} \eps^\gamma \brk{u_\gamma(\eps^\third X) - U_\gamma(X) } = o(\eps^\alpha)
\end{equation}
for an appropriate range of $X$.
Since all terms of order less than $\alpha$ have already been matched, we may focus only on terms of order exactly $\alpha$.
In the inner series, only $U_\alpha$ contributes a term of order exactly $\alpha$:
\begin{equation*}
  \sum_{\gamma\le \alpha} \eps^\gamma  U_\gamma(X)  =  \quad \ldots \quad + \eps^\alpha U_\alpha(X) + o(\eps^\alpha)
\end{equation*}
where $\ldots$ indicates terms of order less than $\alpha$, which are not relevant for calculating the $\Oh(\eps^\alpha)$-solution.
By contrast, in the outer series, in addition to $u_\alpha$, derivatives of lower-order terms in the series also 
contribute terms of exactly this order.
Of course these lower-order terms in the outer solution have already been determined; typically they are matched by the particular-solution 
part of $U_\alpha$, the arbitrary constants in the homogeneous solution playing no role.
The ``bleeding'' of lower-order terms into the order-$\alpha$ matching is also responsible for the fact that the matching interval shrinks 
as $\alpha$ increases.

Similar considerations apply to matching at $y=1$.

We have everywhere performed matching in terms of the inner variable.
It is possible to use the outer variable instead, but in our opinion the calculations are less clear:
Specifically, the terms needing to be matched at order $\alpha$ are precisely those that, 
when expressed in terms of the inner variable, are proportional 
to $\eps^\alpha$.


\section{Asymptotics for the Pinned Case}
\label{sec:pinned}
\subsection{The Low-Order Solution}
We now consider pinned
boundary conditions at $y=1$ with the same conditions at $y=0$:
\begin{equation}
  \label{eq:pinned:BC:at:1}
  u''(0)=0, \qquad u'''(0)=0, \qquad  u(1)=0,\qquad u''(1)=0,
\end{equation}
plus the normalization $u(0)=1$.
We calculate the terms in the asymptotic series through the first nontrivial correction to the eigenvalue---in this case order one.
The results are summarized in Table~\ref{tab:asymptotic:series:pinned}.

\begin{table}[h!]
\setlength{\extrarowheight}{3pt} 
\centerline{
\begin{tabular}{|C|*{4}{C}|}
\hline
\alpha& \lambda_\alpha &u_\alpha&U_\alpha&V_\alpha\\
\hline
0&\lambda_0& \tJ(y)& 1& 0\\
\third& 0&0 &-\lambda_0 X&0\\
\half&0&0&0&-\tJ'(1)Z\\
\tthirds&0&-\frac{\lambda_0^2}{6\Ai'(0)}\tJ(y)&\frac{\lambda_0^2}{4}X^2-\frac{\lambda_0^2}{2\Ai'(0)} \int_0^X
  \Ai(x)\,dx&0\\
\fsixths&0&0&0&0\\
1&\eqref{eq:pinned:lambda_1} &\eqref{the-end}&\eqref{eq:pinned:U_1}&\eqref{eq:V1:form}\\[1ex]
\hline
\end{tabular}}
\caption{The coefficients of the asymptotic expansion for the pinned
  case. The last row contains the equation numbers where the terms, too long and
  cumbersome for the table, can be found.}
\label{tab:asymptotic:series:pinned}
\end{table}

\begin{exe}
Derive the first 3 rows of Table~\ref{tab:asymptotic:series:pinned}.
\end{exe}

These first few orders are very similar to the clamped case.

\subsection{The $\eps^\tthirds$ Correction Term}
\label{subsec:pinned:2/3}
\mysubsubsec{Boundary-layer near $y=0$}
Using the values of $U_\third$ and $\lambda_\third$ from
Table~\ref{tab:asymptotic:series:pinned}, the reduced equation (\ref{eq:reduced:equations:U}) for
$U_\tthirds$ has the inhomogeneous term $-\lambda_0^2X$,
with particular solution $U_{p}=\tfrac{\lambda_0^2}{4} X^2$. 
Therefore, the relevant family of solutions is 
\begin{equation}
  U_\tthirds(X)=\tfrac{\lambda_0^2}{4} X^2 + A + B\int_0^X \Ai(\tau) d\tau  + C \cdot\Psi(X).
\end{equation}
The boundary conditions $U_\tthirds(0)= U_\tthirds''(0)= U_\tthirds'''(0)=0$ determine that $A=C=0$
and
$B=-\tfrac{\lambda_0^2}{2\Ai'(0)}$.
So we have verified the table entry for $U_\tthirds$.

\mysubsubsec{Boundary-layer near $y=1$}
Equation (\ref{eq:reduced:equations:V}) together with the two boundary conditions at $y=1$ imply that $V_\tthirds$ has the form
\begin{equation}
  \label{eq:V:reduced:2/3}
  V_\tthirds(Z)=A(1-Z-e^{-Z}).
\end{equation}

\mysubsubsec{Bulk}
Because  $\lambda_\third$ vanishes, the ODE (\ref{eq:reduced:equations:u}) for $u_\tthirds$ has only the inhomogeneous 
term $-\lambda_\tthirds u_0$.
The general solution of this equation is
\begin{equation*}
u_\tthirds(y)= -\lambda_\tthirds w(y)+a \tJ(y)+b\tY(y),
\end{equation*}
where $w(y)$ is the function that solves \eqref{eq:w-third}.

\mysubsubsec{Matching}
At $y=0$ we need equation \eqref{dgs1}, with $\alpha=2/3$, to hold
for $X$ in the range $\eps^{-p} \ll X \ll \eps^{-q}$ where $0 \le p < q \le 1/3$.
We take $p=0, q=1/9$ so that $(\eps^\third X)^3 = o(\eps^\tthirds)$, and
we focus only on terms of order exactly 2/3, using ellipsis to represent terms of lower order.
From (c) and using the fact that $w(0)=0$,
\begin{equation} \label{eq:dgs2}
 \sum_{\gamma\le 2/3} \eps^\gamma u_\gamma(\eps^\third X) = \quad \ldots \quad
    + \eps^\tthirds \brk{\frac{u_0''(0)}{2} X^2 +a +b \log(\eps^\third X) } +o(\eps^\tthirds).
\end{equation}
On the other hand, reading $U_\tthirds$ from Table~\ref{tab:asymptotic:series:pinned} and using \eqref{eq:Airy:Integrals} from the Appendix~\ref{subsec:airy},
\begin{equation} \label{eq:dgs3}
 \sum_{\gamma\le 2/3} \eps^\gamma U_\gamma(X) = \quad \ldots \quad
    + \eps^\tthirds \brk{\frac{\lambda_0^2}{4} X^2 - \frac{\lambda_0^2}{6\Ai'(0)}  } +o(\eps).
\end{equation}
We see from the Taylor series~\eqref{eq:taylor:J_0} that the quadratic terms in~\eqref{eq:dgs2} and~\eqref{eq:dgs3} agree, 
as expected.
Matching the new terms at order $2/3$ gives
\begin{equation} 
\label{eq:dgs4}
   a = -\frac{\lambda_0^2}{6\Ai'(0)}, \qquad b=0.
 \end{equation}

At $y=1$ we match 
for $Z$ in the range  $\eps^{-p} \ll Z \ll \eps^{-q}$ where  
we take $p=0, q=1/6$ so that $(\eps^\half Z)^2 = o(\eps^\tthirds)$.
Again we focus only on terms of order exactly 2/3.
Now
\begin{equation*}
  \sum_{\gamma\le 2/3} \eps^\gamma u_\gamma(1-\eps^\half Z) = \quad \ldots \quad + \eps^\tthirds u_\tthirds(1) + o(\eps^\tthirds);
\end{equation*}
no derivative of the bulk solution bleeds into the $\eps^\tthirds$ term since there is no term of order 1/6.
At the same time, by (\ref{eq:V:reduced:2/3})
\begin{equation*} 
 \sum_{\gamma\le 2/3} \eps^\gamma  U_\gamma(Z)  = \quad \ldots \quad + A \eps^\tthirds (1-Z) +  o(\eps^\tthirds).
\end{equation*}
Thus matching requires $A=0$ and $ u_\tthirds(1)=0$; the latter may be
written out as
\begin{equation*}
  - \lambda_\tthirds w(1) +a \tJ(1) = 0
\end{equation*}
where $a$ is given by (\ref{eq:dgs4}).
Of course $\tJ(1) = 0$ and $w(1) \ne 0$, thus it follows that $\lambda_\tthirds=0$ and $u_\tthirds$
is as given in Table~2.

\mysubsubsec{The composite approximation}
As discussed in Subsection~\ref{subsec:composite:approximation}, the composite approximation is given by the sum of the outer and inner
approximations minus the 
matching terms.
Taking the outer and inner solutions from Table~\ref{tab:asymptotic:series:pinned} and the matching terms near $y=0$ from
\eqref{eq:dgs3}, we find
\begin{equation} \label{tthirds-composite}
 \UU_\tthirds(y) = \tJ(y) - \eps^\tthirds \frac{\lambda_0^2}{6 \Ai'(0)} \prn{\tJ(y) - 3\int_{\eps^{-\third} y}^\infty \Ai(x) dx }.
\end{equation}
Here the integral term results from a convenient cancellation: in the matching layer, as 
$X \to \infty$, the integral over
$(0,X)$ tends to 
the integral over $(0,\infty)$, so the inner solution minus the matching terms simplifies to the integral over $(X,\infty)$.

Figure~\ref{fig:comp-func-compare-tthirds} compares the numerical solution and two other approximations to the
third eigenfunction: the naive approximation and the order-2/3 composite approximation.
The graphs of all three functions resemble that of Figure~1, and to visual accuracy they coincide; therefore  
we have plotted only differences. 
Figure~\ref{fig:comp-deriv-compare-tthirds} graphs the second
derivatives of these three approximate eigenfunctions.
In all plots $\eps = 10^{-5}$; because we are examining the \emph{third} eigenfunction, we need a
smaller $\eps$ than in Subsection~\ref{subsec:composite:approximation}
in order for the asymptotics to be meaningful---see
Section~\ref{sec:closing}.
The order-2/3 composite approximation captures most of the boundary-layer behavior of the 
eigenfunction.
In particular, observing in the Taylor series \eqref{eq:taylor:J_0} that $\tJ''(0)=\frac{\lambda_0^2}{2}$, we compute that 
\begin{equation*}
 \UU_\tthirds''(0) = - \frac{\lambda_0^4}{12 \Ai'(0)} \eps^\tthirds;
\end{equation*}
thus, the second derivative
vanishes to leading order.

\begin{figure}[th]
\subfloat[Differences.]{\scalebox{.9}{\includegraphics{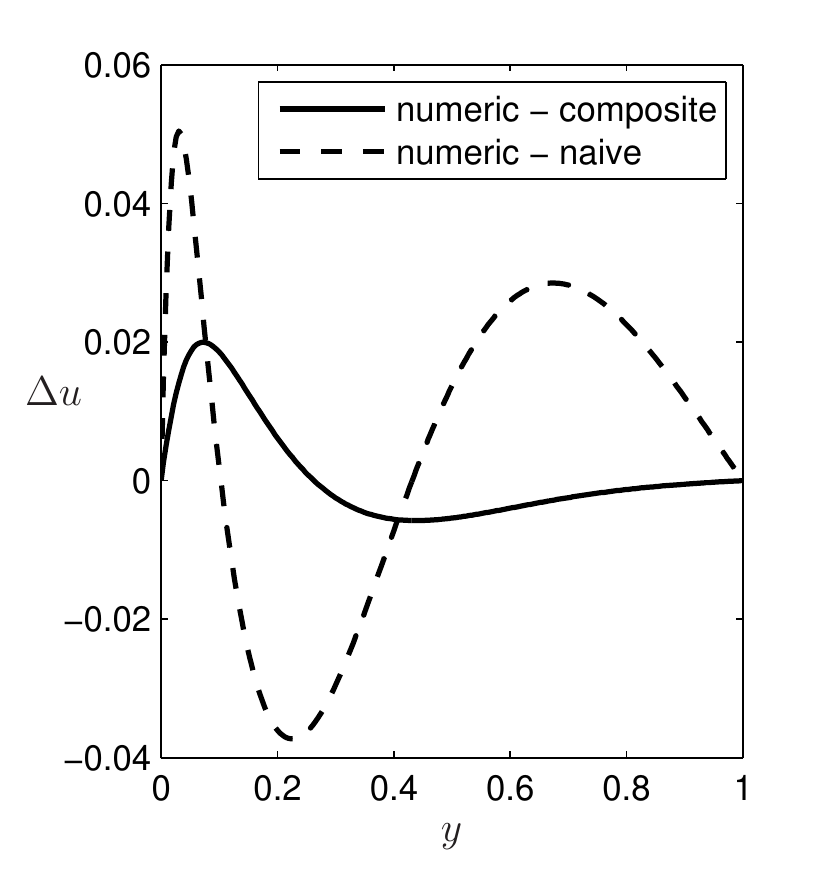}}
\label{fig:comp-func-compare-tthirds}}
\hfill
\subfloat[Derivatives.]{\scalebox{.9}{\includegraphics{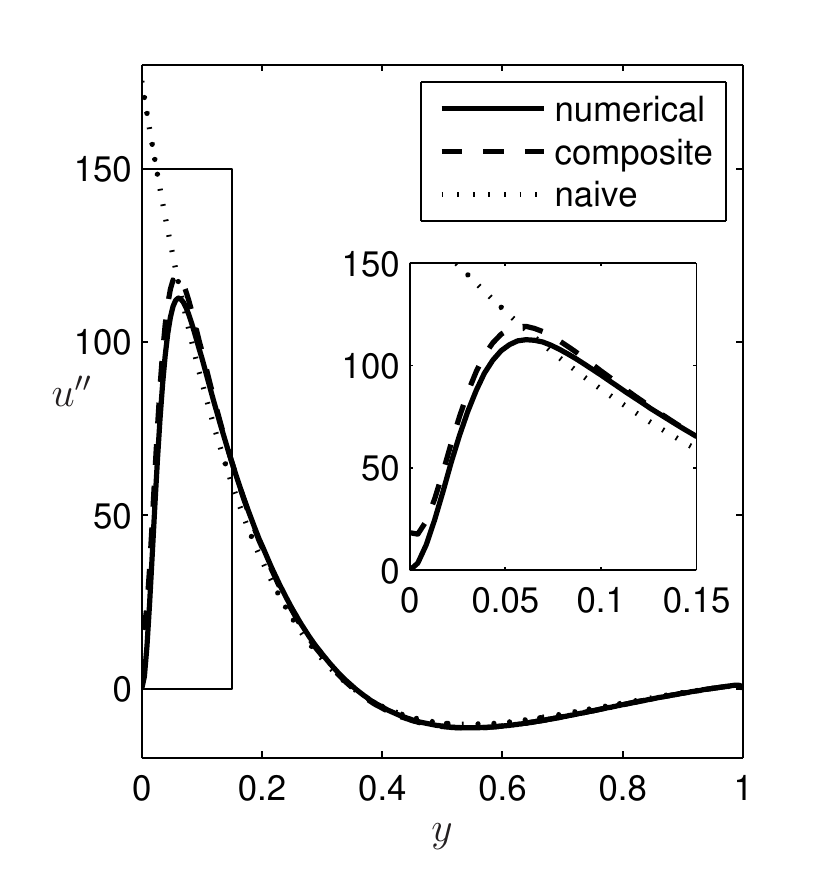}}
\label{fig:comp-deriv-compare-tthirds}}
\caption{Comparison of (a) function values and (b) second derivatives of order-2/3 approximations to the third
  eigenvalue, for $\eps = 10^{-5}$. Since the function values are too close to discern
  visually, various differences between functions
  are shown instead.}
\end{figure}

\subsection{The $\eps^\fsixths$ Correction}
\begin{exe}
\label{exe:5/6}
  Show that all the terms in this order vanish, as shown in Table~\ref{tab:asymptotic:series:pinned}.
\end{exe}

\subsection{The $\eps^1$ Correction}

This order of the calculation is pedagogically interesting because a logarithmic term enters the
series.

\mysubsubsec{The $y=0$ boundary}
Equation \eqref{eq:reduced:equations:U}
for $U_1$ is
\begin{equation}
  \label{eq:ODE:U:1}
  U_1''''-(XU_1')' =\frac{\lambda_0^3}{4}X^2-\frac{\lambda_0^3}{2\Ai'(0)} \int_0^X \Ai(x)dx.
\end{equation}
This looks quite bad due to the integral of $\Ai$ in the RHS, but in Exercise~\ref{exe:particular:U:1} you are guided through a proof that
\begin{equation}
  \label{eq:G:def}
  G(X)=-\frac{\lambda_0^3}{36}X^3+\frac{\lambda_0^3}{2\Ai'(0)} \int_0^X (X-x) \Ai(x)dx 
\end{equation}
is a particular solution to \eqref{eq:ODE:U:1}, and in Exercise~\ref{exe:G} you are asked to apply boundary conditions to conclude that
\begin{equation}
  \label{eq:pinned:U_1}
  U_1(X)=G(X) - \frac{\lambda_0^3}{2}\frac{\Ai(0)}{(\Ai'(0))^2} \int_0^X \Ai(x) dx 
+ \frac{\lambda_0^3}{3} \Psi(X).
\end{equation}

\begin{exe}
\label{exe:particular:U:1}
Find a particular solution to \eqref{eq:ODE:U:1}:
\begin{enumerate}
\item Show that the derivative of $ \int_0^X (X-x) \Ai(x)dx$ equals $\int_0^X \Ai(x)dx$.
\item Use this when applying the LHS of ODE \eqref{eq:ODE:U:1}
  to $ \int_0^X (X-x) \Ai(x)dx$ and obtain $-\int_0^X \Ai(x)dx$. 
\item Conclude that $G(X)$ in \eqref{eq:G:def} is a particular solution of \eqref{eq:ODE:U:1}
 that grows only polynomially at large values of $X$.
\end{enumerate}
\end{exe}

\begin{exe}  
\label{exe:G}
Find the $y=0$ boundary layer solution $U_1$:
 \begin{enumerate}
  \item Using Airy's differential equation, show that 
  \begin{equation}
    \label{eq:G:props}
    G(0)= 0,\quad
    G''(0)=\frac{\lambda_0^3}{2}\frac{\Ai(0)}{\Ai'(0)}, \quad 
    G'''(0)=\frac{\lambda_0^3}{3}.
  \end{equation}
\item The family of possible solutions for $U_1$ is
\begin{equation}
  \label{eq:U:1:family}
  U_1(X)=G(X)+ A +B\int_0^X \Ai(x) dx + C \Psi(X).
\end{equation}
 Using what you just found about $G$, show that \eqref{eq:pinned:U_1} satisfies the boundary conditions
$U_1(0)=U_1''(0)=U_1'''(0)=0$.
\end{enumerate}
\end{exe}

\mysubsubsec{The $y=1$ boundary}
The boundary layer equation is $V_1'''' -  V_1''=\tJ'(1)$ whose family of possible solutions is
\begin{equation*}
\label{eq:V:1:family}
  V_1=-\frac{\tJ'(1)}{2}Z^2+A+BZ+Ce^{-Z}.
\end{equation*}
The boundary conditions $V(0)=V''(0)=0$ determine $A$ and $C$
so that 
\begin{equation} 
\label{eq:V1:form}
  V_1= - \tJ'(1)\brk{\frac{1}{2}Z^2 +1 -e^{-Z}} + BZ,
\end{equation}
and by matching to the bulk solution (below), it is found that $B=0$.

\mysubsubsec{Bulk}
The bulk equation for $u_1$ is familiar, albeit with a new term on the
RHS:
\begin{equation}
(  y u_1')'+\lambda_0 u_1 = -\lambda_1 u_0 + u_0''''.
\label{eq:reduced:u:1}
\end{equation}
Its general solution is
\begin{equation}
  \label{eq:2}
u_1(y)= -\lambda_1 w(y)+v(y)+a \tJ(y)+b\tY(y),
\end{equation}
where $w(y)$ is defined by \eqref{eq:w-third} and $v(y)$ is the solution to 
\begin{equation}
(yv')'+\lambda_0 v =  \tJ''''(y), \qquad v(0)=0.
\label{eq:ODE:v}
\end{equation}
(Existence and uniqueness of $v(y)$ provided by Lemma~\ref{lem:w}.)

\mysubsubsec{Matching}
Near $y=0$ we need \eqref{dgs1} to hold 
for a range of $X$ such that $\eps^{-p}\ll X\ll\eps^{-q}$ where $0\le
  p<q\le\frac13$.
We take $p=0$, $q=1/12$ so that $(\eps^\third X)^4=o(\eps^1)$.
Using an ellipsis for terms of order less than 1, we have for $u$ 
\begin{equation}
  \label{eq:Matching:u_1:near:0}
  \sum_{\gamma\le1}\eps^\gamma u_\gamma(\eps^\third
    X)=\ldots+ \eps\brk{\frac{\tJ'''(0)}{6}X^3
  -\frac{\lambda_0^2\tJ'(0)}{6\Ai'(0)}X+ a +  b \prn{\frac{1}{3} \log \eps + \log  X } }+o(\eps^1);
\end{equation}
the cubic and linear terms come from derivatives of $u_0$ and $u_\tthirds$ respectively.
At large $X$, the behavior of $U_1$ is
  \begin{equation}
    \label{eq:U_1:large_X}
  \sum_{\gamma\le1}\eps^\gamma U_\gamma(X)=\ldots +\eps \lambda_0^3 \brk{-\frac{1}{36}X^3+\frac{1}{6\Ai'(0)}    X 
+ C_\infty + \frac{1}{3} \log X} +o(\eps^1) 
  \end{equation}
where 
\begin{equation}
\label{eq:C_tilde}
 C_\infty=  \frac12 - \frac{\Ai(0)}{6(\Ai'(0))^2} +\frac13\Psi_\infty \approx 0.06855 .
\end{equation}
Here the 1/2 in $C_\infty$ comes from a term in $G(X)$, using the relation
\begin{equation*}
 \int_0^\infty x \Ai(x) dx = - \Ai'(0)
\end{equation*}
obtained from the Airy equation.
It is readily seen from \eqref{eq:taylor:J_0} that the cubic and linear terms of \eqref{eq:Matching:u_1:near:0} and \eqref{eq:U_1:large_X} match, and the 
logarithmic terms will match if $b=\lambda_0^3 /3$.
However, it is not possible to match \eqref{eq:Matching:u_1:near:0} and \eqref{eq:U_1:large_X} with a coefficient $a$ that is independent of $\eps$.
To fix this problem we propose to augment the $u$-series with a logarithmic term: i.e., to replace $\eps u_1(y)$ by
\begin{equation} \label{u1-log}
 \eps \brk{(\log \eps) \; \hat{u}_1(y) + u_1(y)}.
\end{equation}
In Exercise~\ref{exe:no:hats} below we ask the reader to show that:
\begin{itemize}
 \item No such log terms are possible in the asymptotic series for either boundary layer or for $\lambda$.
 \item The equation for $\hat{u}_1$ is just the homogeneous version of \eqref{eq:reduced:equations:u} so that for some 
coefficients $\hat{a}, \hat{b}$
\begin{equation} \label{u-log}
 \hat{u}_1(y) = \hat{a} \tJ(y) + \hat{b} \tY(y).
\end{equation}
\end{itemize}
Therefore in matching, the RHS of \eqref{eq:Matching:u_1:near:0} should be replaced by
\begin{equation}
  \label{eq:Matching:u_log:near:0}
  \ldots+ \eps\brk{\frac{\tJ'''(0)}{6}X^3
  -\frac{\lambda_0^2\tJ'(0)}{6\Ai'(0)}X+ (a+\hat{a} \log \eps) +  (b+\hat{b} \log \eps) \prn{\frac{1}{3} \log \eps + \log  X } }+o(\eps^1).
\end{equation}
Matching between \eqref{eq:U_1:large_X} and \eqref{eq:Matching:u_log:near:0} is now possible if and only if
\begin{equation}
 a=C_\infty, \qquad b=\lambda_0^3/3, \qquad \hat{a}=-b/3= -\lambda_0^3/9, \qquad \hat{b}=0.
\end{equation}

Near the boundary at $y=1$ we match for $Z$ in the range   $1 \ll Z\ll\eps^{-1/6}$ so that both $(\eps^\half Z)^3=o(\eps^1) \;$ 
(to allow neglect of $u_0'''$) and
$\eps^\tthirds \cdot\eps^\half Z =o(\eps^1) \;$ (to allow neglect of $u_\tthirds'$).
As $Z\goto\infty$, the $u-$series has the asymptotic behavior
\begin{equation}
 \ldots \quad + \eps\brk{u_0''(1) \frac{Z^2}{2} +u_1(1)}  +o(\eps);
\end{equation}
mercifully the contribution of $\hat{u}_1$, which is proportional to $\tJ(1)$, vanishes.
From \eqref{eq:V1:form} the $V$-series has asymptotic behavior
\begin{equation}
 \ldots \quad + \eps \brk{ -\tJ'(1) \prn{ \frac{Z^2}{2} +1} +BZ} +o(\eps).
\end{equation}
According to Exercise~\ref{exe:bessel:derivs} below, the quadratic terms match; 
for the linear terms to match we need $B=0$; and for the constant terms to match we need
\begin{equation}
  u_1(1) = -\tJ'(1).
\end{equation}
Thus manipulating \eqref{eq:2}, we deduce that
\begin{equation}
  \label{eq:pinned:lambda_1}
  \lambda_1=\frac{v(1)+\frac{\lambda_0^3}{3}\tY(1)+\tJ'(1)}{w(1)}=\lambda_0\frac{\int_0^1\tJ''''(\tau)
    \tJ(\tau) d\tau}{[\tJ'(1)]^2}+ \frac13 \frac{\lambda_0^4}{[\tJ'(1)]^2} -\lambda_0,
\end{equation}
the latter equality following from the proof of Lemma~\ref{lem:w}, from Lemma~\ref{lem:lambda_1/2},
and \eqref{prod-JprimeY}.
We may regard the three terms in \eqref{eq:pinned:lambda_1} as contributions
from the fourth derivative in 
\eqref{eq:singularly_perturbed}, the boundary conditions at $y=0$, and the boundary conditions at $y=1$, respectively.
(Numerically, we find that $\lambda_1\approx 4.4280, 1887.2$ and $44403$ for the first three eigenvalues.)
Finally, we have 
\begin{equation} \label{the-end}
 (\log\eps) \hat{u}_1 (y) +u_1(y)= -\lambda_1 w(y)+v(y)+(C_\infty -\frac{\lambda_0^3}{9} \log \eps) \, \tJ(y)
  + \frac{\lambda_0^3}{3} \, \tY(y)
\end{equation}
where $C_\infty$ is given by \eqref{eq:C_tilde}.

As a check on our calculations, in Figure~\ref{fig:2_term:error:pinned} we
present a log-log plot of the error in the two-term approximation
$\lambda_0 + \lambda_1 \eps$ to the eigenvalue. The remaining error in
the approximation seems to have a slope of \(4/3\).
\begin{figure}[th]
\subfloat[Pinned case.]{\scalebox{.9}{\includegraphics{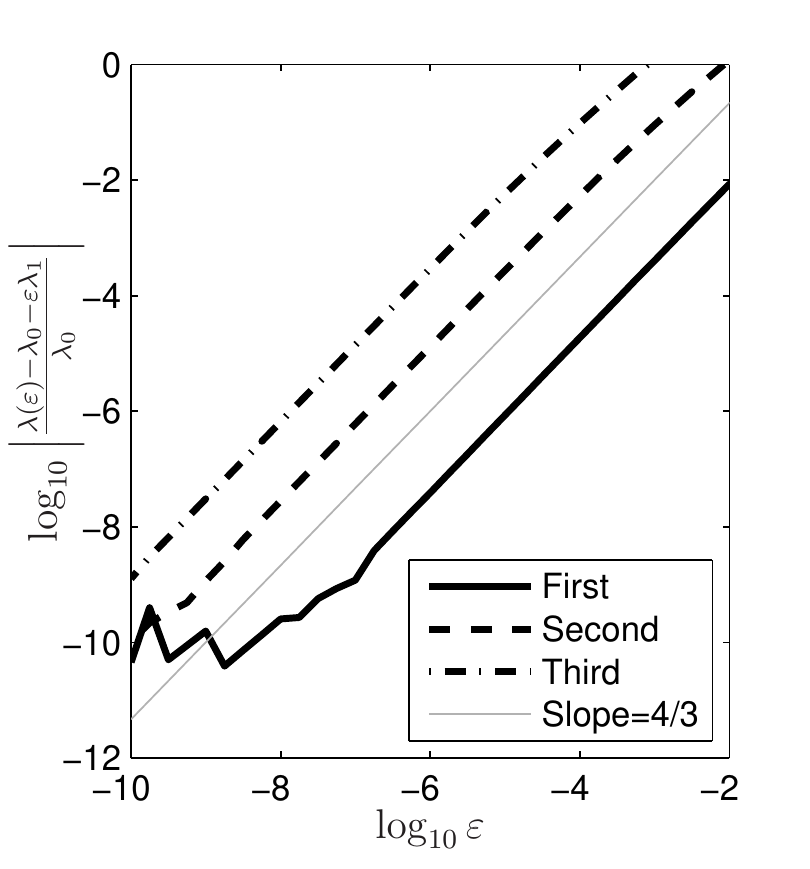}}
  \label{fig:2_term:error:pinned}}
\hfill
\subfloat[Clamped case.]{\scalebox{.9}{\includegraphics{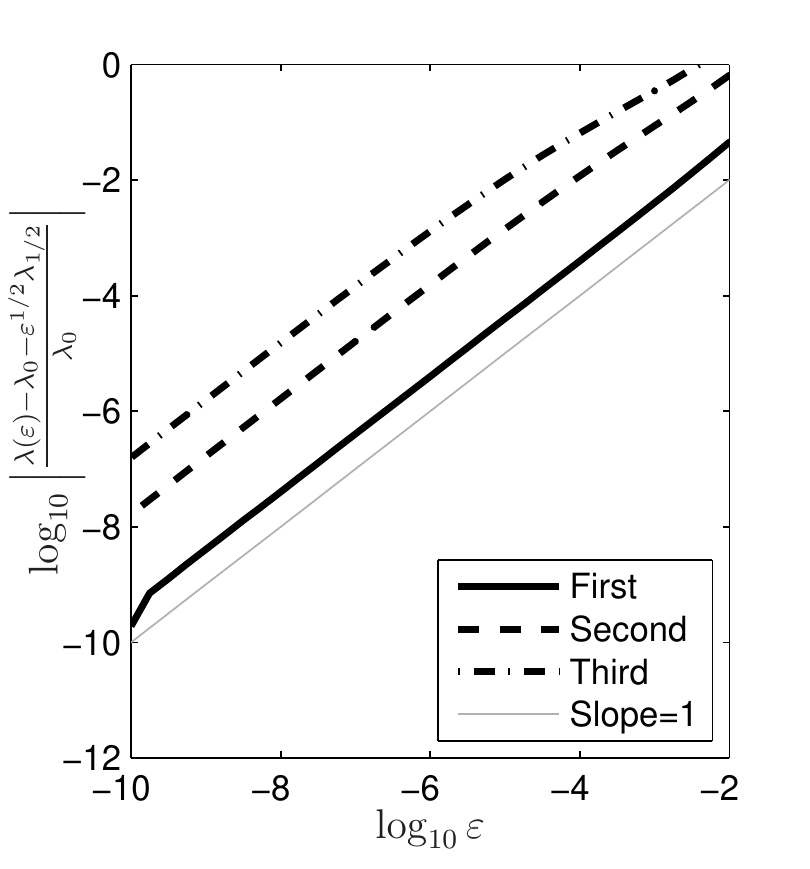}}
 \label{fig:2_term:error:clamped}}
\caption{A log-log plot of the error of the two-term approximation
      to the eigenvalues, together with a line of appropriate slope
      for visual reference.}
\end{figure}
\begin{exe} 
\label{exe:no:hats} 
Suppose that all of the series---for $u$, $U$, $V$, and $\lambda$---contain terms of order $\eps \log\eps$, with the relevant 
functions identified by a ``hat.''
\begin{enumerate}
\item Deduce that $\hat{U}_1(X) \equiv 0$ because it satisfies a homogeneous differential equation of the type 
\eqref{eq:reduced:equations:U} with homogeneous boundary conditions.
\item Deduce from the (homogeneous) equation for $\hat{V}_1$, of the type 
\eqref{eq:reduced:equations:V}, and the (homogeneous) boundary conditions that $\hat{V}_1(Z) = BZ$ for some 
constant $B$.  Show by matching with the lower-order parts of the inner solution that $B=0$.
\item Observe that $\hat{u}_1(y)$ will satisfy an ODE of the type  
\eqref{eq:reduced:equations:u}
\begin{equation*}
 (y \hat{u}_1'(y))' + \lambda_0 \hat{u}_1(y) = \hat\lambda_1 u_0.
\end{equation*}
Deduce from this equation plus matching that $\hat{\lambda}_1 = 0$ and $\hat{u}_1$ has the form \eqref{u-log}.
\end{enumerate}
\end{exe}

\begin{exe}
\label{exe:bessel:derivs} 
Using the fact that $\tJ(1)=0$, deduce from ODE \eqref{eq:reduced:equations:u} for $u_0$ that
\begin{equation*}
 \tJ'(1) = -\tJ''(1).
\end{equation*}
\end{exe}

We invite the ambitious reader to calculate the additional terms in asymptotic series for the
\emph{clamped} case through $\Oh(\eps)$:
The order-2/3 terms are identical and 
the order-5/6 terms have one minor difference, while there are some significant differences at
order-1.  
For example, the correction for the eigenvalue is given by
\begin{equation}
\label{eq:clamped:lambda_1}
  \lambda_1=\frac{v(1) +\lambda_0^2 r(1)+\frac{\lambda_0^3}{3}\tY(1) -\frac34\tJ'(1) +\lambda_0
w'(1)}{w(1)},
\end{equation}
where $r$ is defined by
\begin{equation*}
 (y r')' +\lambda_0 r =  w, \qquad r(0)=0.
\end{equation*}
Numerically, $\lambda_1\approx 6.4581, 1900$ and $44435$ for the first three eigenvalues.

\section{Closing Remarks}
\label{sec:closing}
As mentioned in the introduction, the aim of this paper was to understand the effect on the
eigenfunctions for \eqref{eq:wave:flag} of a small, but nonzero, bending resistance.
We have found that, for small epsilon, the order-2/3 composite approximation
\eqref{tthirds-composite} provides a reasonable correction to
the naive eigenfunctions.
However, the phrase ``small epsilon'' needs clarification.

Our notation so far hides the fact that problem \eqref{eq:singularly_perturbed} has an
infinite sequence of eigenvalues $\lambda^{(n)}(\eps)$ that tend to 
infinity as $n \to \infty$.
For any fixed $n$, $ \; \lambda^{(n)}(\eps)$ has an asymptotic series in $\eps$ with leading order $\lambda^{(n)}_0$ 
characterized by \eqref{eq:zeroBessel}. 
We ask, for a given $n$, how small must $\eps$ be to get into the asymptotic range.
A minimal requirement is that the order-2/3 correction to the eigenfunction be small compared to
unity: i.e., 
\begin{equation} \label{depend1}
\eps^\tthirds \lambda_0^2 \ll 1.
\end{equation}
To make this more quantitative, we invoke  the asymptotic approximation \eqref{J-bessel-asymp}
of the Bessel function to deduce that, as $n \to \infty$
\begin{equation} \label{n-depce-evalue}
    \lambda^{(n)}_0 \sim (n-\tfrac14)^2 \, \frac{\pi^2}{4}.
\end{equation}
Thus, \eqref{depend1} requires that 
\begin{equation} \label{first-estimate-eps-vs-n}
 \eps \ll n^{-6}.
\end{equation}
In other words, the meaning of ``small epsilon'' depends heavily on $n$.

In fact, for some purposes, even \eqref{first-estimate-eps-vs-n} is not 
sufficiently restrictive.
Let us define the asymptotic region by requiring that, in Figure~\ref{fig:lambda:bvp}, say in the clamped case, 
the graph of $\lambda^{(n)}(\eps) - \lambda^{(n)}_0$ 
 has converged to a line of slope 1/2; in symbols,
\begin{equation} \label{eq:7_minus_3}
 |\lambda^{(n)}(\eps) - \lambda^{(n)}_0 -\eps^\half \lambda^{(n)}_\half|  \ll \eps^\half \lambda^{(n)}_\half.
\end{equation}
We may estimate the LHS of \eqref{eq:7_minus_3} by the next term in the asymptotic series,
$\eps \lambda^{(n)}_1$, and it may be shown by estimating the terms of \eqref{eq:clamped:lambda_1}
that 
$\lambda^{(n)}_1 = \Oh(n^7)$.
Since $\lambda^{(n)}_\half = \lambda^{(n)}_0 $ and by \eqref{n-depce-evalue} the latter is
$\Oh(n^2)$, formula \eqref{eq:7_minus_3}  is equivalent to $\eps n^7 \ll \eps^\half n^2$, or
$ \eps  \ll n^{-10}$.

The restrictions in applying small-$\eps$ asymptotics are so severe that often
problem \eqref{eq:singularly_perturbed} is more accurately approximated by treating $\eps$ as a
\emph{large} parameter: i.e., regarding  \eqref{eq:singularly_perturbed} as
a perturbation of a fourth-order equation by a second-order operator.
This point of view may be developed systematically to explain the straight-line behavior in the
range 
$10^{-2} < \eps <1 $ in Figure~\ref{fig:lambda:bvp} and predict that this behavior will continue
indefinitely
as $\eps$ increases.

\section{Acknowledgements}
We are grateful to Avshalom Manela for conversations introducing us to the flag problem, and to
Manuel Kindelan for the suggestion that \eqref{eq:reduced:equations:U4} was better solved as a
boundary value problem than with shooting methods.
DGS is grateful to Universidad Carlos III de Madrid and Banco de Santander for generous sabbatical support during the academic year 2009-10.

\bibliographystyle{amsalpha}
\bibliography{general}
%
\appendix

\section{Properties of Special Functions}
\label{sec:app:properties}
The properties of  the Bessel and Airy functions in this Appendix are
taken from Chapters~9 and~10 of \cite{AbramowitzStegun64}, respectively.

\subsection{The Airy Functions, $\Ai$ and $\Bi$}
\label{subsec:airy}
The Airy functions, $\Ai$ and $\Bi$, are  the standard solutions to the Airy equation   $u''-xu=0$.
The Wronskian of these solutions is constant:
\begin{equation}
 \Ai(x)\Bi'(x)-\Ai'(x)\Bi(x)=\frac1\pi.
\end{equation}
The Airy functions have exponential behavior for large positive $x$:
\begin{equation}
 \Ai(x) \sim \frac{e^{-\frac23x^{3/2}}}{2\sqrt\pi\,x^{1/4}}, \qquad
 \Bi(x) \sim \frac{e^{\frac23x^{3/2}}}{\sqrt\pi\,x^{1/4}}.
 \label{eq:Airy:asymptotic}
\end{equation}
The definite integral of $\Ai$ converges and is known explicitly \cite[10.4.82]{AbramowitzStegun64}:
\begin{equation}
  \label{eq:Airy:Integrals}
  \int_0^\infty \Ai(x)=\frac13.
\end{equation}
Incidentally,
\begin{equation}
 \Ai'(0)=-\frac{1}{3^\third \Gamma(\third)}.
\end{equation}

\subsection{Bessel Functions of Order Zero}
\label{subsec:bessel}

The two standard solutions to the zeroth-order Bessel Equation $u''+\frac{1}{x}u'+u=0$
are denoted $J_0$ and $Y_0$.  
Despite the singularity of Bessel's equation at $x=0$, $\; J_0(x)$ is differentiable near zero and in fact its power series 
converges for all $x$.
By contrast, as $x \to 0$,
\begin{equation*}
  Y_0(x)=\frac{2}{\pi}\prn{\log(\tfrac12 x) +\gamma} + \Oh(x^2 \log x)
\end{equation*}
where $\gamma = 0.577215665\ldots$ is Euler's constant \cite[9.1.13]{AbramowitzStegun64}.
For large $x$, these functions have the asymptotic approximation
\begin{align}
\label{J-bessel-asymp}
J_0(x) &\sim \sqrt{\frac{2}{\pi x}} \cos(x - \pi/4), \\
\label{Y-bessel-asymp}
Y_0(x) &\sim \sqrt{\frac{2}{\pi x}} \sin(x - \pi/4) 
\end{align}
In particular, $J_0$ has an infinite sequence of positive real zeros, beginning with
\begin{equation} \label{bessel-zeros}
2.40482, \;  5.52007, \; 8.65372, \;\, \dots.
 \end{equation}

In the paper we need linear combinations of Bessel functions with a slightly different
argument as defined in \eqref{eq:Bessel:short:J} and \eqref{eq:Bessel:short:Y}.
The Wronskian of $\tJ$ and $\tY$ may be computed from the fact that the Wronskian of $J_0$ and $Y_0$ equals $1/ \pi x$ and making 
the substitution $x=2\sqrt{\lambda_0 y}$: 
\begin{equation}
 \cW=\tJ(y)\tY'(y) - \tY(y) \tJ'(y) = \frac1y.
\end{equation}
$\tJ$ has the power series representation 
\begin{equation} \label{eq:taylor:J_0}
 \tJ(y) = 1 - \lambda_0 y +\prn{\frac{\lambda_0 y}{2!}}^2 - \prn{\frac{\lambda_0 y}{3!}}^3 + \ldots.
\end{equation}
This may be obtained by substitution into the power series for $J_0(x)$ or by computing coefficients recursively from the ODE \eqref{eq:reduced:expanded}.

\section{Proof: The Boundary-Layer Function $\Psi$}
\label{app:proof:Psi}

\begin{proof}[Proof of Lemma~\ref{lem:psi}:]{\itshape (Existence) } 
Equation \eqref{eq:reduced:equations:U4} has an irregular singular point at $X=\infty$ but is regular everywhere else. 
Near the singular point, this equation admits a formal series solution 
\begin{equation} \label{eq:series}
 \Psi(X)\sim	 \log X + \sum_{k=1}^{\infty} c_k X^{-3k}
\end{equation}
where $c_1=2/3$ and subsequent coefficients may be determined recursively.
Therefore there is a solution of \eqref{eq:reduced:equations:U4}, an entire function of $X$, that has the asymptotic series 
\eqref{eq:series} as $X \to \infty$, and in particular, the asymptotic behavior in the lemma.
By subtracting off a multiples of $U^{(1)},U^{(2)}$ we can also
satisfy the boundary conditions at $X=0$ without affecting the
logarithmic behavior as $X\goto\infty$.

{\itshape (Uniqueness) }
Now suppose $\tilde{\Psi}$ is another solution of \eqref{eq:reduced:equations:U4} that also satisfies 
the conditions of the lemma, and let $\Phi = \Psi - \tilde{\Psi}$.
Then $\Phi$ solves \eqref{eq:reduced:equations:U1} and $\Phi(X) = \Oh(1)$ as $X\goto\infty$. 
$\Phi$ must be a linear combination of $\{U^{(i)}, \; i=1,2,3,4\}$, where these functions are defined in Subsection~\ref{ssec:BL:Equations}.
However, $U^{(3)}$ and $U^{(4)}$ both grow more rapidly than $\Oh(1)$, and so
does any nontrivial linear combination of them.
We conclude that $\Phi = c_1 U^{(1)} + c_2 U^{(2)}$ for some constants $c_1,c_2$, and since
$\Phi(0) = \Phi''(0)=0$, we find that $c_1 = c_2 =0$.
\end{proof}

Figure~\ref{fig:Psi} shows a numerically computed graph of $\Psi(X)$.
It was found by applying a boundary-value solver (Matlab's \texttt{bvp5c}) to \eqref{eq:reduced:equations:U4}, specifying two boundary conditions at $X=0$ and
an estimate for the first derivative at $X=20$ obtained from the first 6 terms in \eqref{eq:series}.

 \begin{figure}[th]
  \scalebox{.9}{\includegraphics{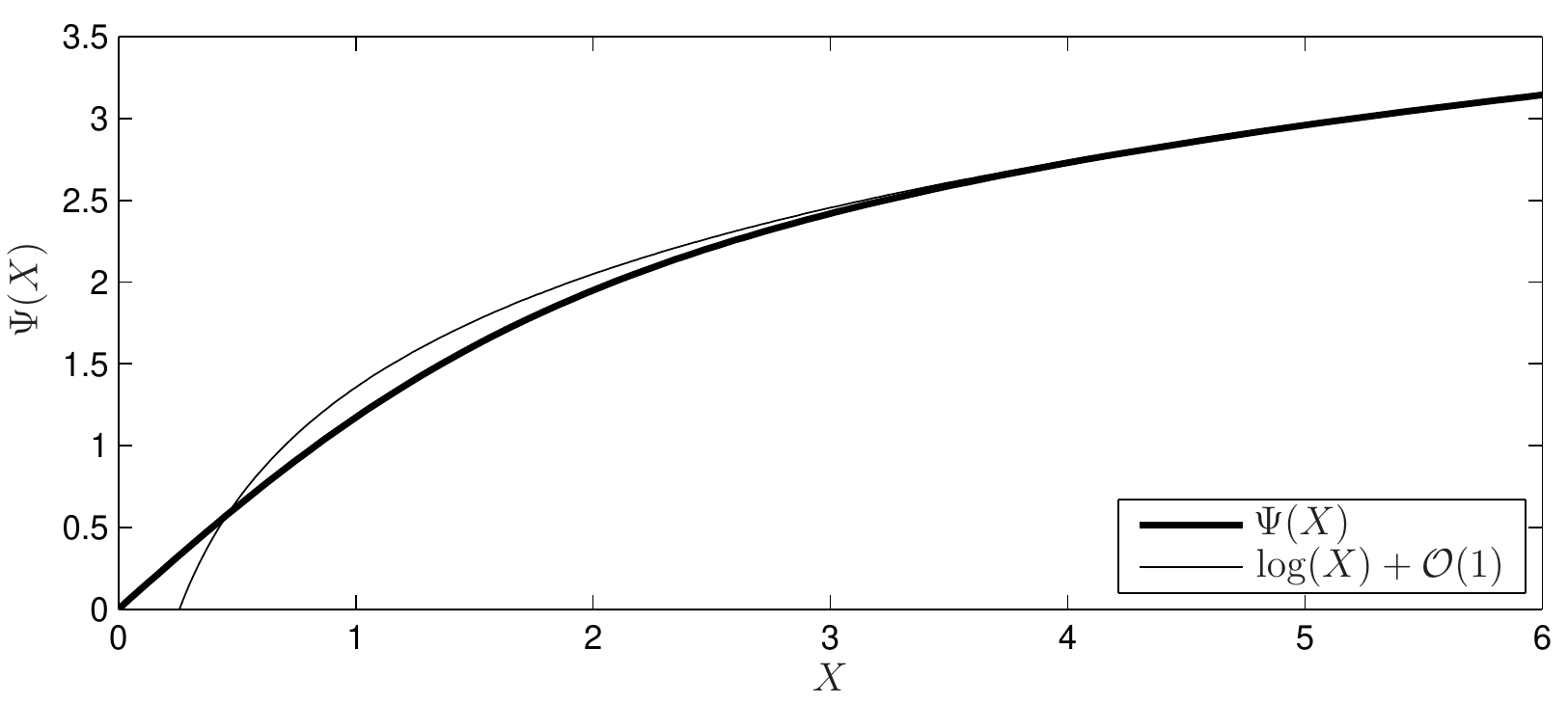}}
 \caption{A plot of $\Psi(X)$. $\Psi(X)$ is
   the solution to (\ref{eq:reduced:equations:U4}) that has
   $\Psi(0)=\Psi''(0)=0$ and a $\log(X)+\Oh(1)$ behavior for large
   $X$. Numerically, we find that $\Psi(X)-\log(X)\approx 1.3556$ at
   large $X$, so we also plot $\log(X)+1.3556$ to illustrate the convergence of $\Psi(X)$ onto it.}
\label{fig:Psi} 
\end{figure}
\end{document}